\documentclass{article}

\usepackage{amsmath} 
\usepackage{amssymb}  
\usepackage{mathtools}		
\usepackage{amsthm}
\usepackage{xcolor}
\usepackage{balance}
\usepackage{subcaption}
\usepackage[margin=1.0in]{geometry}
\usepackage{hyperref}
\usepackage[ruled]{algorithm2e}

\newcommand{\R}{\mathbb{R}} 
\newcommand{\K}{\mathbb{K}} 
 
\newcommand{\N}{\mathbb{N}}

\newcommand{\Lie}{\mathcal{L}}

\newcommand{\M}{\mathbb{M}}
\newcommand{\scL}{\mathscr{L}}

\DeclarePairedDelimiter{\abs}{\lvert}{\rvert}

\DeclarePairedDelimiter{\ceil}{\lceil}{\rceil}

\DeclarePairedDelimiterX{\inp}[2]{\langle}{\rangle}{#1, #2}

\DeclarePairedDelimiter{\Mp}{\mathcal{M}_+(}{)}



\newtheorem{thm}{Theorem}[section]

\newtheorem{rmk}{Remark} 
\usepackage{mathrsfs}


\usepackage{color}
\usepackage{ulem}

\title{\LARGE \bf
Peak Estimation for Uncertain and Switched Systems
}
\renewcommand\footnotemark{}

\author{Jared Miller$^1$, Didier Henrion $^2$, Mario Sznaier$^1$, Milan Korda$^2$
\thanks{$^1$J. Miller and M. Sznaier are with the Robust Systems Lab,  ECE Department, Northeastern University, Boston, MA 02115. (e-mails: miller.jare@northeastern.edu,  msznaier@coe.neu.edu).}
\thanks{$^2$D. Henrion and M. Korda are with LAAS-CNRS, Universit\'e de Toulouse, CNRS, Toulouse, France. They are also with the Faculty of Electrical Engineering of the Czech Technical University in Prague, Czechia. (e-mail: henrion@laas.fr, korda@laas.fr)}
\thanks{ J. Miller and M. Sznaier were partially supported by NSF grants  CNS--1646121, CMMI--1638234, ECCS--1808381 and CNS--2038493,  and AFOSR grant FA9550-19-1-0005. 
This material is based upon research supported by the Chateaubriand Fellowship of the Office for Science \& Technology of the Embassy of France in the United States.
}
\thanks{The work of D. Henrion and M. Korda was partly supported by the European Union's Horizon 2020 research and innovation programme under the Marie Sklodowska-Curie Actions, grant agreement 813211 (POEMA). The work of M. Korda was also partly supported by the Czech Science Foundation (GACR) under contract No. 20-11626Y and by the AI Interdisciplinary Institute (ANITI) funding, through the French "Investing for the Future PIA3" program under the Grant  agreement ANR-19-PI3A-0004.
}
}
\begin{document}

\maketitle
\pagestyle{plain}

\begin{abstract}
\label{sec:abstract}
Peak estimation bounds extreme values of a function of state along trajectories of a dynamical system. This paper focuses on extending peak estimation to continuous and discrete settings with  time-independent and time-dependent uncertainty. Techniques from optimal control are used to incorporate uncertainty into an existing occupation measure-based peak estimation framework, which includes special consideration for handling switching uncertainties. The resulting infinite-dimensional linear programs can be solved approximately with Linear Matrix Inequalities arising from the moment-SOS hierarchy.
\end{abstract}
\section{Introduction}
\label{sec:introduction}

Peak estimation under uncertainty aims to bound extreme values of a state function subject to an adversarial noise process. Examples include finding the maximum height of an aircraft subject to wind, the maximum voltage in a transmission line subject to thermal noise, and the maximum speed of a motor subject to impedance within a tolerance.
A system with finite-dimensional state $x \in \R^{N_x}$ evolves under Ordinary Differential Equation (ODE) dynamics defined by a locally Lipschitz vector field $f$ perturbed by uncertainty over the time-range $t \in [0, T]$. The time-independent uncertainty $\theta \in \Theta \subset \R^{N_\theta}$ is fixed (such as the unknown mass of a system component within tolerance), while the time-dependent uncertainty $w(t)$ may change arbitrarily in time within the region $W \subset \R^{N_w}$. Let $x(t \mid x_0, \theta, w(t))$ denote a trajectory in time starting from an initial point $x_0$ subject to uncertainties $(\theta, w(t))$. The uncertain peak estimation problem with variables $(t, x_0, \theta, w(t))$ may be posed as,
    \begin{align}
    P^* = & \max_{t \in [0, T],\, x_0 \in X_0,\,  \theta \in \Theta,\,  w(t)} p(x(t \mid x_0, \theta, w(t))) &\label{eq:peak_traj}\\
    & \dot{x}(t) = f(t, x(t), \theta, w(t)), \quad 
    w(t) \in W & \forall t \in [0, T]. \nonumber
    \end{align}
This paper produces an infinite-dimensional linear program (LP) in occupation measures to upper bound the quantity $P^*$ from \eqref{eq:peak_traj}. Measure-based peak estimation was introduced in \cite{helmes2001computing} and \cite{cho2002linear} for a stochastic setting, and was numerically approximated by a  discretizing set of finite LPs. The work in \cite{fantuzzi2020bounding} forms a sum-of-squares program from an LP dual to the measure LP in \cite{cho2002linear}.
Each of these are variations on the optimal control framework in \cite{lewis1980relaxation, lasserre2008nonlinear}, with an optimal stopping cost rather than an average integral (running) cost.

Occupation measure-based bounds for uncertain peak estimation may be developed by adapting methods from optimal control. 
Time-dependent uncertainty is an instance of an adversarial optimal control which aims to maximize the state function.
Time-independent parameter uncertainty may be incorporated by adding states, and switched systems can be analyzed by splitting the occupation measure \cite{henrion2012measures}.
The true peak cost $P^*$ is upper bounded with an infinite dimensional LP in occupation measures. The infinite LP is then truncated into a sequence of LMIs by the moment-SOS hierarchy \cite{lasserre2009moments}.

This paper has the following structure: Section \ref{sec:preliminaries} reviews preliminaries such as occupation measures and peak estimation. Section \ref{sec:uncertainty} presents uncertainty models, and a unified uncertain peak estimation model is presented in Section \ref{sec:unified}. Section \ref{sec:discrete} extends uncertain peak estimation to discrete systems. 
Section \ref{sec:safety} presents the application of uncertain peak estimation to safety analysis.
The paper is concluded in Section \ref{sec:conclusion}.
\section{Preliminaries}
\label{sec:preliminaries}

\subsection{Notation}
Let $\N$ be the set of natural numbers, $\R^n$ be an $n$-dimensional real Euclidean space, and $\R[x]$ be the set of polynomials in $x$ with real-valued coefficients. For a set $X \subseteq \R^n$, the sets $C(X)$ and $C_+(X)$ are respectively the set of continuous functions on $X$ and its nonnegative subcone. The subcone $C^1(X) \subset C(X)$ is composed of continuous functions on $X$ with continuous first derivatives. $\Mp{X}$ is the set of nonnegative Borel measures over $X$, and a duality pairing exists $\inp{f}{\mu} = \int_X f(x) d \mu(x)$ for all $f \in C(X), \ \mu \in \Mp{X}$. For every linear operator $\Lie$, there exists a unique linear adjoint $\Lie^\dagger$ such that $\inp{\Lie f}{\mu} = \inp{f}{\Lie^\dagger \mu}, \ \forall f,\mu$ is satisfied. An indicator function is $I_A(x)=1$ for a subset $A \subseteq X$ if $x \in A$ and $I_A(x) = 0$ otherwise. The measure of a set $A \subseteq X$ with respect to $\mu$ is $\mu(A) = \int_{A} d \mu =\int_X I_A(x) d\mu$. The quantity $\mu(X) = \inp{1}{\mu}$ is known as the `mass', and $\mu$ is a probability measure if $\mu(X) = 1$. The Dirac delta $\delta_{x'} \in \Mp{X}$ is a probability measure supported only on $x=x'$. For measures $\mu \in \Mp{X}, \ \nu \in \Mp{Y}$, the product measure satisfies $(\mu \otimes \nu)(A \times B) = \mu(A)\nu(B)$ for all $A \in X, \ B \in Y$. The projection map $\pi^x: X \times Y \rightarrow X$ returns only the $x$ coordinate $(x, y) \rightarrow x$. The pushforward by a function $f$ is the linear operator $f_\#$ satisfying $\inp{v(x)}{f_\# \mu} = \inp{v(f(x))}{\mu}$ for any test function $v \in C(X)$ and measure $\mu \in \Mp{X}$. The $x$-marginal of a measure $\mu \in \Mp{X \times Y}$ may be expressed as the pushforward of a projection $\pi_\#^x \mu$ with duality pairing $\inp{v(x)}{\pi_\#^x \mu} = \int_{X \times Y} v(x) d \mu(x, y) $ holding for all test functions $v(x) \in C(X)$.

\subsection{Peak Estimation and Occupation Measures}
\label{sec:peak}

The standard (no uncertainty) peak estimation setting involves a trajectory $x(t \mid x_0)$ starting at the initial point $x_0 \in X_0 \subset X$ evolving according to dynamics $\dot{x}(t) = f(t, x(t))$ in a space $X$. The program to find the maximum value of a state function $p(x)$ along trajectories is,
\begin{equation}
    \begin{aligned}
    P^* = & \max_{t \in [0, T],\, x_0 \in X_0} p(x(t \mid x_0)),  &\label{eq:peak_traj_std} \dot{x}(t) = f(t, x(t))
    \end{aligned}
\end{equation}



The extremum $P^*$ may be bounded through the use of occupation measure relaxations \cite{cho2002linear}. 
An optimal trajectory satisfying $P^* = p(x^*) = p(x(t^* \mid x_0^*))$ is described by a triple $( x_0^*, t^*, x^*)$ \cite{miller2020recovery}.
The initial probability measure $\mu_0 \in \Mp{X_0}$ is distributed over the set of initial conditions. The peak probability measure $\mu_p \in \Mp{[0, T] \times X}$ is a free-time terminal measure. 
For an optimal stopping time $t^*$ and subsets $A \subseteq [0, t^*], \ B \subseteq X$, the $\mu_0$-averaged occupation measure $\mu \in \Mp{[0, T] \times X}$ has a definition \cite{cho2002linear},
\begin{equation}
    \label{eq:avg_free_occ}
    \mu(A \times B) = \int_{[0, t^*] \times X_0} I_{A \times B}\left((t, x(t \mid x_0))\right) dt \, d\mu_0(x_0)
\end{equation}
The measure $\mu(A \times B)$ yields the average amount of time a trajectory with initial condition $x_0$ drawn from $\mu_0$ will spend in the region $A \times B$.

The Lie derivative operator $\Lie_f$ may be defined for all test functions $v \in C^1([0, T]\times X)$,
\begin{equation}
    \Lie_f v(t, x) = \partial_t v(t,x) + f(t,x) \cdot \nabla_x v(t,x)
\end{equation}
The three measures ($\mu_0, \ \mu_p, \ \mu$) are linked by Liouville's equation for all test functions, 
\begin{align}
\inp{v(t, x)}{\mu_p} &= \inp{v(0, x)}{\mu_0} + \inp{\Lie_f v(t,x)}{\mu} \label{eq:liou_int}
\end{align}

Liouville's equation ensures that initial conditions distributed as $\mu_0$ are connected to terminal points distributed as $\mu_p$ by trajectories following the polynomial vector field $f$.
Two consequences of \eqref{eq:liou_mom} are that $\inp{1}{\mu_0} = \inp{1}{\mu_p}$ ($v(t,x) = 1$) and that $\inp{1}{\mu} = \inp{t}{\mu_p}$ ($v(t, x) = t$).
Equation \eqref{eq:liou_int} may be expressed in a weak sense using the adjoint relaton $\inp{\Lie_f v}{\mu} = \inp{v}{\Lie^\dagger_f \mu}$,
\begin{align}
\mu_p &= \delta_0 \otimes \mu_0 + \Lie_f^\dagger \mu. \label{eq:liou_mom}
\end{align}

A convex measure relaxation of problem \eqref{eq:peak_traj_std} is,
\begin{subequations}
\label{eq:peak_meas}
\begin{align}
p^* = & \ \textrm{max} \quad \inp{p(x)}{\mu_p} \label{eq:peak_meas_obj} \\
    & \mu_p = \delta_0 \otimes\mu_0 + \Lie_f^\dagger \mu \label{eq:peak_meas_flow}\\
    & \inp{1}{\mu_0} = 1 \label{eq:peak_meas_prob}\\
    & \mu, \mu_p \in \Mp{[0, T] \times X} \label{eq:peak_meas_peak}\\
    & \mu_0 \in \Mp{X_0}. \label{eq:peak_meas_init}
\end{align}
\end{subequations}
Constraint \eqref{eq:peak_meas_prob} ensures that both $\mu_0$ and $\mu_p$ are probability measures. The objective \eqref{eq:peak_meas_obj} is the expectation of $p(x)$ with respect to the peak measure $\mu_p$. Program \eqref{eq:peak_meas} has a dual problem  over continuous functions, 
\begin{subequations}
\label{eq:peak_cont}
\begin{align}
    d^* = & \ \min_{\gamma \in \R} \quad \gamma & \\
    & {\gamma} \geq {v(0, x)}  &  & \forall x \in X_0 \label{eq:peak_cont_init}\\
    & \Lie_f v(t, x) \leq 0 & & \forall (t, x) \in [0, T] \times X \label{eq:peak_cont_f}\\
    & v(t, x) \geq p(x) & & \forall (t, x) \in [0, T] \times X \label{eq:peak_cont_p} \\
    &v \in C^1([0, T]\times X) \label{eq:peak_cont_v}&
\end{align}
\end{subequations}

The variable $v(t,x)$ is termed an auxiliary function in \cite{fantuzzi2020bounding}, and is an upper bound on the cost function $p(x)$ by \eqref{eq:peak_cont_p}. The graph $(t, x(t \mid x_0))$ is contained in the sublevel set $\{(t, x) \mid v(t, x) \leq \gamma\}$ for all $x_0 \in X_0$. Programs \eqref{eq:peak_meas} and \eqref{eq:peak_cont} satisfy strong duality ($p^* = d^*$) when the set $[0, T] \times X$ is compact (Theorem C.20 of \cite{lasserre2009moments}). The measure solution produces an upper bound $p^* \geq P^*$, and this bound is tight with $p^* = P^*$ when the set $[0, T] \times X$ is compact (Sec. 2.3 of \cite{fantuzzi2020bounding} and \cite{lewis1980relaxation}).


The work in \cite{cho2002linear} approximates  Problems \eqref{eq:peak_meas} and \eqref{eq:peak_cont} by a discretized linear program over a fine mesh. The method in \cite{fantuzzi2020bounding} bounds \eqref{eq:peak_cont} with a sum-of-squares (SOS) relaxation of polynomial nonnegativity constraints. The SOS relaxation produces a converging sequence of upper bounds to $p^*=d^*$ when $[0, T] \times X$ is compact. Optimal trajectories can be  localized by sublevel sets of  $v(t,x)$ and $\Lie_f v(t, x)$  following the method in \cite{fantuzzi2020bounding}.
\subsection{Moment-SOS Hierarchy}
\label{sec:moment_sos}
The $\alpha$-moment of a measure $\mu$ for a multi-index $\alpha \in \N^n$ is $y_\alpha = \inp{x^\alpha}{\mu}$. 
The moment sequence $y$ is the infinite collection of moments $\{y_\alpha\}_{\alpha \in \N^n}$ of the measure $\mu$.
There exists a linear (Riesz) functional $L_y$ converting a polynomial $p(x) \in \R[x]$ into a linear combination of moments in $y$:
\begin{equation}
    L_y(p) = L_y\left(\textstyle \sum_{\alpha \in \N} p_\alpha x^\alpha \right) = \textstyle\sum_{\alpha \in \N} p_\alpha y_\alpha
\end{equation}

The moment matrix $\M[y]$ is a square symmetric matrix of infinite size and is indexed by monomials $(\alpha, \beta)$ as $\M[y]_{\alpha,\beta} = y_{\alpha + \beta}$ \cite{lasserre2009moments}. If a polynomial $p=\sum_{\alpha}p_\alpha x^\alpha$ with coefficients $p_\alpha$ is treated as a vector $\mathbf{p}$, evaluation of $\inp{p(x)^2}{\mu}$ is equivalent to $\mathbf{p}^T \M[y] \mathbf{p}$ by the Riesz functional $L_y$. 
Nonnegativity of $\inp{p(x)^2}{\mu}$ for all $p(x) \in \R[x]$ requires that $\M[y]$ is Positive Semidefinite (PSD). 

A basic semialgebraic set $\K = \{x \mid g_i(x) \geq 0, \ i = 1, \ldots, N_c\}$ may be the support set for a measure $\mu \in \Mp{\K}$. Because $\mu$ is supported over the region $\{x \mid g_i(x) \geq 0\}$, the evaluation $\inp{p(x)^2 g_i(x)}{\mu}$ is nonnegative for all polynomials $p(x) \in \R[x]$. The PSD localizing matrix associated with $g_i(x) \in \R[x]$ and the moment sequence $y$ is,
\begin{equation}
    \M[g_i y]_{\alpha, \beta} = \textstyle\sum_{\gamma \in \N^n} g_{i\gamma} y_{\alpha+ \beta + \gamma}.
\end{equation}
A necessary condition for a moment sequence $y$ to correspond with moments of a representing measure on $\K$ is that $\M[y]$ and all $\M[g_i y]$ are PSD. This necessary condition is sufficient if $\K$ is Archimedean \cite{putinar1993compact}.
A degree-$d$ finite truncation of these matrices keeps moments up to order $2d$, which are located in the upper-left corners of the infinite dimensional matrices. 
The truncated moment matrix $\M_d[y]$ has size $\binom{n+d}{d}$ corresponding to the monomials of $x$ with degree $\leq d$, and the localizing matrix $\M_{d - \textrm{deg}(g_i)}[g_i y]$ has size $\binom{n+d - \textrm{deg}(g_i)}{d-\textrm{deg}(g_i)}$.
An infinite dimensional LP in measures may be posed with a polynomial objective $p(x)$ and $m$ polynomial constraint functions $a_j(x) \in \R[x], \ \forall j=1, \ldots, m$ with $b\in\R^{m}$ as,
\begin{subequations}
\label{eq:meas_program}
\begin{align}
    p^* =&  \max_{\mu \in \Mp{X}} \inp{p}{\mu}\\
    &\inp{a_j(x)}{\mu} = b_j  & & \forall j=1,\ldots,m.
\end{align}
\end{subequations}
The degree-$d$ finite truncation of \eqref{eq:meas_program} is an LMI with an $\binom{n+2d}{2d}$-dimensional vector of moments $y$ as a variable,
\begin{subequations}
\begin{align}
\label{eq:mom_program}
    p^*_d &=  \ \max_{y} \textstyle\sum_\alpha p_\alpha y_\alpha \\
    & \M_d(y) \succeq 0, \ \M_{d - d_i}(g_{i} y) \succeq 0  & & \forall i = 1, \ldots, N_c\\
    & \textstyle\sum_\alpha a_{j \alpha} y_\alpha\ = b_j  & & \forall j = 1, \ldots, m.
\end{align}
\end{subequations}
Increasing $d$ results in a decreasing sequence of upper bounds $p_d^* \geq p_{d+1}^* \geq \ldots \geq p^*$, which is convergent if $\K$ is Archimedean. The refinement of upper bounds to \eqref{eq:meas_program} by LMIs of increasing complexity is the moment-SOS hierarchy \cite{lasserre2009moments}. The moment-SOS relaxation to the peak estimation program \eqref{eq:peak_meas} is available in Equation (15) of \cite{miller2020recovery}, which is an LMI in moment sequences $(y_0, y_p, y)$ up to degree $2d$ of the measures $(\mu_0, \mu_p, \mu)$. These moment relaxations are dual to the SOS programs in \cite{fantuzzi2020bounding}. Near-optimal trajectories extremizing $p(x)$ may be recovered from LMI solutions if the moment matrices obey rank conditions \cite{miller2020recovery}.


\section{Uncertainty Models}
\label{sec:uncertainty}
This section summarizes techniques for incorporating uncertainty into occupation-measure based frameworks, and briefly notes their application to peak estimation. The methods mentioned here arose from optimal control and the approximation of reachability sets. The two basic types of uncertainty are time-independent ($\theta \in \Theta$) and time-dependent ($w \in W$). It is assumed that $\Theta$ and $W$ are compact basic semialgebraic sets, just like $X$ and $X_0$. 

\subsection{Time-Independent Uncertainty}
\label{sec:time_indep}
Time-independent uncertainty $\theta_\ell$ for $\ell = 1 \ldots N_\theta$ may take values in a set $\Theta \subseteq \R^{N_\theta}$, and typically arises in systems with parameter tolerances. 
The time-independent $\theta$ may start at any value in $\Theta \subset \R^{N_\theta}$ and is then constant along trajectories. By the methods in \cite{lasserre2008nonlinear, henrion2012measures}, the state space may be extended into $X \times \Theta$ by adding new states $\theta$ with constant dynamics $\dot{\theta}_\ell = \Lie_f \theta_\ell = 0$ for each $\ell = 1 \ldots N_\theta$. 


\subsection{Time-Dependent Uncertainty}
\label{sec:time_dep}
Systems with time-dependent uncertainty may have the noise process $w(t)$ change arbitrarily quickly in $W$ over time $t$. Such bounded time-varying noise may be found in driving or piloting tasks with changing winds. 
The disturbance $w(t)$ is a Borel measurable function of time rather than the It\^o-type stochastic process  considered in \cite{helmes2001exittime}.
For an input $w(t) \in W$ and a subset $D \subseteq W$, the disturbance-occupation measure $\mu^w(A \times B \times D)$ is,
\begin{align}
    \label{eq:occ_measure_d}
    \int_{[0, T] \times X_0} I_{A \times B \times D}((t, x(t), w(t)) \mid x_0) dt \, d \mu_0(x_0). 
\end{align}
The disturbance $w(t)$ may be relaxed into a distribution $\omega(w \mid x, t)$, which is known as a Young Measure \cite{young1942generalized, lewis1980relaxation}.
The disturbance-occupation measure $\mu^w$ can be disentangled into $d\mu^w(t, x, w) = dt \, d\xi(x \mid t) \, d\omega(w \mid x, t)$ for conditional distributions $\xi, \omega$.
Liouville's equation with a relaxed disturbance $\omega(w \mid x, t)$ influencing dynamics $f(t, x, w)$ for all $v(t, x) \in C^1([0, T]\times X)$ is,
\begin{subequations}
\begin{align}
\label{eq:liou_d}
\inp{v(t, x)}{\mu_p} &= \inp{v(0, x)}{\mu_0} + \inp{\Lie_f v(t, x)}{\mu^w}. \end{align}
Equivalent expressions are formed by rearranging operators,
\begin{align}
\inp{v}{\mu_p} &= \inp{v}{\delta_0 \otimes \mu_0} + \inp{\Lie_f v}{\mu^w} & & \forall v \\
\inp{v}{\mu_p} &= \inp{v}{\delta_0 \otimes \mu_0} + \inp{v}{\Lie_f^\dagger \mu^w}&  & \forall v \label{eq:weak_before_sum}\\
\inp{v}{\mu_p} &= \inp{v}{\delta_0 \otimes \mu_0 + \pi^{tx}_\#\Lie_f^\dagger \mu^w} & & \forall v.\label{eq:weak_after_sum}
\end{align}
\end{subequations}

The measures of the two summands on the right hand side of \eqref{eq:weak_before_sum} reside in different spaces, as $\delta_0 \otimes \mu_0 \in \Mp{[0, T] \times X}$ while $\Lie_f^\dagger \mu^w \in \Mp{[0, T] \times X \times W}$. The $(t,x)$-marginalization $\pi^{tx}_\# \Lie^\dagger_f \mu^w \in \Mp{[0, T] \times X}$ allows the measures to be added together inside the duality pairing in \eqref{eq:weak_after_sum}. The duality pairings $\inp{v(t,x)}{\Lie^\dagger_f \mu^w}$ and $\inp{v(t,x)}{\pi^{tx}_\#\Lie^\dagger_f \mu^w}$ are equal for all $v \in C^1([0, T] \times X)$
because $v(t,x)$ is not a function of $w$. The weak disturbed Liouville's Equation is derived from \eqref{eq:weak_after_sum}  by treating  $\forall v(t, x) \in C^1([0, T]\times X)$ as implicit,
\begin{equation}
\label{eq:liou_disturb_weak}
    \mu_p = \delta_0 \otimes \mu_0 + \pi^{tx}_\# \Lie^\dagger_f \mu^w.
\end{equation}

Time-varying disturbances may be incorporated into peak estimation by letting $\mu \in \Mp{[0, T] \times X \times W}$ be a disturbance-occupation measure of the form in \eqref{eq:occ_measure_d} obeying a disturbed Liouville equation \eqref{eq:liou_disturb_weak}. The support sets  of the measures $\mu_0 \in \Mp{X_0}, \ \mu_p \in \Mp{[0, T] \times X}$ are unchanged when time-dependent uncertainty is added.


\subsection{Switching Uncertainty}

\label{sec:switching}



An approach for analyzing switched systems with occupation measures is presented in \cite{henrion2012measures}.
Let $\{X^k\}_{k=1}^{ N_s}$ be a closed cover of $X$ with $N_s$ switching modes. The sets  $X^k$ are not necessarily disjoint, and together satisfy $\cup_k X^k = X$ (definition of closed cover).
Each region $X^k$ has dynamics $\dot{x} = f_k(t,x)$ for some locally Lipschitz vector field $f_k$.
The closed cover formalism generalizes partitions of $X$  (deterministic dynamics) and arbitrary switching where $X^k = X \ \forall k$ (polytopic uncertainty). Polytopic uncertainty is a model with dynamics $f(t, x, k) = \sum_k w_k f_k(t, x)$ where the disturbance $w_k \in \R_+^{N_s}$ satisfies $\sum_k w_k = 1$. Trajectories from a switching system are equipped with a function $S: [0, T] \rightarrow 1 \ldots, N_s$ yielding the resident subsystem at time $t^-$. Such a trajectory under switching may be written as $x(t \mid x_0, S(t))$.
The switched measure program introduces an occupation measure $\mu_k \in \Mp{[0, T] \times X^k}$ for each subsystem $f_k$, 
\begin{equation}
    \mu = \textstyle\sum_k \mu_k \qquad \Lie^\dagger \mu = \textstyle\sum_k \Lie^\dagger_k \mu_k.
\end{equation}
A valid auxiliary function $v(t,x)$ from \eqref{eq:peak_cont_f} must decrease along all subsystems \cite{parrilo2008approximation, vlassis2014polytopic}.
Problem \eqref{eq:peak_cont} may be modified for switching by enlarging Constraint \eqref{eq:peak_cont_f} to,
\begin{equation}
    \Lie_{f_k} v(t,x) \leq 0 \quad \forall (t,x) \in [0, T] \times X_k, \ k = 1 \ldots N_s.\end{equation}

\begin{rmk}
The closed cover switching formalism may be expanded into a system with general time-dependent uncertainty if desired. The switching basic semialgebraic sets may be described as $X^k = \{x \mid g_{ki}(x)\geq 0 \ i = 1, \ldots, N_c^k\}$ for $N_c^k$ polynomial constraints each. 
A linear expression of time-dependent uncertain dynamics is $\dot{x}(t) = \sum_{k=1}^{N_s} w_k(t) f_k(t, x (t))$ for processes $w(t) \in \R^{N_s}_+$ satisfying $\sum_k w_k(t) = 1$ for all $t \in [0, T]$. Additional constraints must be imposed to enforce that the process $w_k(t)$ is zero whenever $x(t) \not\in X^k$. These constraints may be realized as $\{w_k g_{k i}(x) \geq 0, \ \forall i=1, \ldots, N_c^k, \ \forall k = 1, \ldots N_s\}$.
\end{rmk}
\section{Continuous-Time Uncertain Peak Estimation}
\label{sec:unified}
This section combines the uncertainty formulations from section \ref{sec:uncertainty} to form a pair of primal-dual infinite-dimensional LPs. The variables $\theta \in \Theta, w \in W$ will respectively denote time-independent and  time-dependent uncertainties of sizes $N_\theta, N_w$. The dynamics $f$  have $N_s$ switching subsystems $f_k(t, x, \theta, w)$ which are valid in regions $X_k \subseteq X$.

\subsection{Continuous-Time Measure Program}
A combined uncertain peak estimation measure program is detailed in Program \eqref{eq:peak_meas_un} with indices $k=1, \ldots, N_s$ for the switching subsystems,
\begin{subequations}
\label{eq:peak_meas_un}
\begin{align}
p^* = & \ \textrm{max} \quad \inp{p(x)}{\mu_p} & \label{eq:peak_meas_un_obj} \\
    & \mu_p = \delta_0 \otimes\mu_0 + \textstyle\sum_{k} \pi^{tx\theta}_\#\Lie_{f_{k}}^\dagger \mu_{k} \label{eq:peak_meas_un_flow}\\
    & \mu_0(X_0) = 1 & \label{eq:peak_meas_un_prob}\\
    & \mu_{k} \in \Mp{[0, T] \times X \times \Theta \times W} &\forall k\label{eq:peak_meas_un_occ}\\
    & \mu_p \in \Mp{[0, T] \times X \times \Theta} & \\
    & \mu_0 \in \Mp{X_0 \times \Theta}.& \label{eq:peak_meas_un_init}
\end{align}
\end{subequations}
\begin{thm}
The solution $p^*$ to program \eqref{eq:peak_meas_un}  will yield an upper bound to $P^*$ in \eqref{eq:peak_traj}.  \label{thm:peak_meas}
\end{thm}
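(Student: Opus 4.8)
The plan is to establish $p^* \ge P^*$ by the standard ``every trajectory embeds as a measure'' argument: given any datum feasible for \eqref{eq:peak_traj}, I will build a tuple $(\mu_0,\mu_p,\{\mu_k\})$ feasible for \eqref{eq:peak_meas_un} whose objective \eqref{eq:peak_meas_un_obj} equals the peak value attained along that datum; since the measure program maximizes over a superset of behaviors, its optimum dominates $P^*$. Fix $\epsilon>0$ and, by definition of $P^*$ as a supremum in \eqref{eq:peak_traj}, choose $(t^*,x_0^*,\theta^*,w^*(\cdot))$ together with a measurable switching signal $S:[0,T]\to\{1,\dots,N_s\}$ such that the resulting Carath\'eodory trajectory $x^*(t):=x(t\mid x_0^*,\theta^*,w^*(t))$ exists and remains in $X$ on $[0,t^*]$, obeys $x^*(t)\in X^{S(t)}$, and satisfies $p(x^*(t^*))\ge P^*-\epsilon$. (If the maximum in \eqref{eq:peak_traj} is attained one may take $\epsilon=0$.)

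Next I define the candidate measures. Set $\mu_0:=\delta_{x_0^*}\otimes\delta_{\theta^*}\in\Mp{X_0\times\Theta}$ and $\mu_p:=\delta_{(t^*,\,x^*(t^*),\,\theta^*)}\in\Mp{[0,T]\times X\times\Theta}$; both are probability measures, so \eqref{eq:peak_meas_un_prob} holds. For each mode $k$, let $\mu_k\in\Mp{[0,T]\times X\times\Theta\times W}$ be the switched disturbance-occupation measure acting on $\phi\in C([0,T]\times X\times\Theta\times W)$ by
\begin{equation*}
\inp{\phi}{\mu_k}=\int_0^{t^*} I_{\{S(t)=k\}}\,\phi(t,\,x^*(t),\,\theta^*,\,w^*(t))\,dt .
\end{equation*}
Since $t\mapsto(x^*(t),w^*(t))$ and $S$ are Borel measurable, each $\mu_k$ is a well-defined finite nonnegative Borel measure; it is supported where $S(t)=k$, hence where $x^*(t)\in X^k\subseteq X$, and $\theta^*\in\Theta$, $w^*(t)\in W$ throughout, so the support constraints \eqref{eq:peak_meas_un_occ} hold.

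It remains to check Liouville's equation \eqref{eq:peak_meas_un_flow}. Fix $v\in C^1([0,T]\times X\times\Theta)$. As $x^*$ is absolutely continuous with $\dot{x}^*(t)=f_{S(t)}(t,x^*(t),\theta^*,w^*(t))$ for a.e.\ $t$ and $\theta$ has zero dynamics, the chain rule gives $\tfrac{d}{dt}v(t,x^*(t),\theta^*)=\Lie_{f_{S(t)}}v(t,x^*(t),\theta^*)$ for a.e.\ $t$, the Lie derivative acting in $(t,x)$ only. Integrating on $[0,t^*]$ and partitioning by the value of $S(t)$,
\begin{equation*}
\inp{v}{\mu_p}-\inp{v}{\delta_0\otimes\mu_0}=v(t^*,x^*(t^*),\theta^*)-v(0,x_0^*,\theta^*)=\sum_{k=1}^{N_s}\inp{\Lie_{f_k}v}{\mu_k}.
\end{equation*}
Because $v$ does not depend on $w$, $\inp{\Lie_{f_k}v}{\mu_k}=\inp{v}{\pi^{tx\theta}_\#\Lie_{f_k}^\dagger\mu_k}$ exactly as in the passage leading to \eqref{eq:liou_disturb_weak}, so $\inp{v}{\mu_p}=\inp{v}{\delta_0\otimes\mu_0+\sum_k\pi^{tx\theta}_\#\Lie_{f_k}^\dagger\mu_k}$ for all admissible $v$, which is \eqref{eq:peak_meas_un_flow}. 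Finally $\inp{p(x)}{\mu_p}=p(x^*(t^*))\ge P^*-\epsilon$, so \eqref{eq:peak_meas_un} has a feasible point of value at least $P^*-\epsilon$, and since $\epsilon>0$ was arbitrary, $p^*\ge P^*$.

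The step demanding the most care is this Liouville verification in the switched setting: one must guarantee that a valid, Borel-measurable switching signal $S$ exists for the chosen trajectory (so the mode-wise split of the fundamental theorem of calculus is legitimate), and that passing to Carath\'eodory solutions under a merely Borel-measurable $w^*$ keeps the chain-rule identity valid almost everywhere. The remaining items — support membership, the normalization \eqref{eq:peak_meas_un_prob}, and reconciling the $w$-marginalization via $\pi^{tx\theta}_\#$ — are routine bookkeeping already illustrated in Section \ref{sec:time_dep}.
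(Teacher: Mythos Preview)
Your proof is correct and follows essentially the same embedding argument as the paper: construct Dirac initial and peak measures from an (near-)optimal trajectory and define mode-wise occupation measures supported on the graph, then verify Liouville's equation to obtain a feasible point of \eqref{eq:peak_meas_un} with objective at least $P^*-\epsilon$. The paper first treats $N_s=1$ and then extends to switching, whereas you go directly to the switched case and are more explicit about the chain-rule verification and the $\epsilon$-supremum; these are cosmetic differences, not a different route.
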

\begin{proof}
First assume $N_s = 1$ with $X^1=X$, so there is only one switching domain. An optimal achievement of \eqref{eq:peak_traj} reaching the peak value of $P^*$ may be characterized by the tuple $(x_0^*, t^*, x_p^*, \theta^*, w^*(t))$. The peak value $p(x_p^*) = P^*$ is achieved by following the trajectory $x(t \mid x_0^*, \theta^*, w^*(t))$ until time $t=t^*$. Measures $ (\mu_0, \mu_p, \mu)$ may be defined from this optimal tuple such that the measures satisfy constraints
\eqref{eq:peak_meas_un_flow}-\eqref{eq:peak_meas_un_init}. The initial measure and peak measure may be set to $\mu_0 = \delta_{x=x_0^*}$ and $\mu_p =  \delta_{t=t^* }\otimes \delta_{x=x_p^*} \otimes \delta_{\theta = \theta^*}$ based on the optimal tuple. The measure $\mu \in \Mp{[0, T] \times X \times \Theta \times W}$ may be defined as the unique occupation measure satisfying,
\begin{equation}
    \inp{\tilde{v}}{\mu} =  \int_{t=0}^{t^*} \tilde{v}(t, x(t \mid x_0^*, \theta^*, w^*(t)), \theta^*, w^*(t)) dt,
\end{equation}
for all test functions $\tilde{v} \in C([0, T ] \times X \times \Theta \times W)$.
The measures $(\mu_0, \mu_p, \mu)$ satisfy constraints \eqref{eq:peak_meas_un_flow}-\eqref{eq:peak_meas_un_init}, so $p^* \geq P^*$ when $N_s = 1$. 

Optimal trajectories arising from a  system with $N_s > 1$ may be described in a tuple as $(x_0^*, t^*, x_p^*, \theta^*, w^*(t), S^*(t))$, where $S^*(t)$ is the sequence of switches undergone between times $t \in [0, t^*]$. The measures $\mu_0$ and $\mu_p$ may remain the same as in the non-switched case. Switching occupation measures $\mu_k$ may be set to the unique occupation measure supported on the graph $(t, x(t \mid x_0^*, \theta^*, w^*(t)), \theta^*, w^*(t))$ between times $t\in[0, t^*]$ when $S(t) = k$. These occupation measures satisfy constraints \eqref{eq:peak_meas_un_flow} and \eqref{eq:peak_meas_un_occ}, proving that there exists a feasible solution to \eqref{eq:peak_meas_un_flow}-\eqref{eq:peak_meas_un_init} with objective $P^*$ for the case of switching.
\end{proof}



\subsection{Continuous-Time Function Program}
Dual variables $v(t,x,\theta) \in C^1([0,  T] \times X \times \Theta)$ and $\gamma \in \R$ can be defined to find the Lagrangian of \eqref{eq:peak_meas_un}. 
\begin{align}
    \scL &= \inp{p(x)}{\mu_p} + \inp{v(t,x,\theta)}{\delta_0 \otimes\mu_0 + \textstyle\sum_{k} \pi^{tx\theta}_\# \Lie_{f_{k}}^\dagger \mu_{k}} \nonumber\\
    &+ \inp{v(t,x,\theta)}{- \mu_p} + \gamma(1 - \inp{1}{\mu_0}). \nonumber
\end{align}
The resulting dual program in $(v, \gamma)$ is,
\begin{subequations}
\label{eq:peak_cont_un}
\begin{align}
    d^* = & \ \min_{\gamma \in \R} \quad \gamma \label{eq:peak_cont_un_obj}\\
    & \forall (x, \theta) \in X_0 \times  \Theta: \nonumber \\
    & \quad {\gamma} \geq {v(0, x, \theta)}  &   \label{eq:peak_cont_un_init}\\
    & \forall (t, x, \theta, w) \in [0, T] \times X_k \times  \Theta \times W: \quad \forall k \nonumber \\
    & \quad \Lie_{f_{k}} v(t, x, \theta)\leq 0 \label{eq:peak_cont_un_flow}\\
    & \forall (t, x, \theta) \in [0, T] \times X \times  \Theta: \nonumber \\
    & \quad v(t, x, \theta) \geq p(x) \label{eq:peak_cont_un_p}  \\
    &v(t, x, \theta) \in C^1([0, T]\times X \times \Theta) \label{eq:peak_cont_un_v}.
\end{align}
\end{subequations}
\begin{thm}
There is no duality gap between \eqref{eq:peak_meas_un} and \eqref{eq:peak_cont_un} when the set $[0, T] \times X \times \Theta \times W$ is compact.
\label{thm:peak_strong}
\end{thm}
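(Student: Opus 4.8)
The plan is to establish weak duality and zero duality gap by recognizing Program \eqref{eq:peak_meas_un} as an instance of a conic linear program between the cone of nonnegative measures and its predual cone of continuous functions, and then invoking an abstract infinite-dimensional LP duality theorem (such as Theorem C.20 of \cite{lasserre2009moments}, already cited for the non-uncertain case in Section \ref{sec:peak}). First I would write the primal problem in the standard form $\sup\{\inp{c}{z} : \A z = b,\ z \in \mathcal{C}\}$, where $z = (\mu_0, \mu_p, (\mu_k)_{k=1}^{N_s})$ lives in the product cone $\mathcal{C} = \Mp{X_0 \times \Theta} \times \Mp{[0,T]\times X \times \Theta} \times \prod_k \Mp{[0,T]\times X \times \Theta \times W}$, the objective functional is $z \mapsto \inp{p(x)}{\mu_p}$, and the linear operator $\A$ encodes the Liouville constraint \eqref{eq:peak_meas_un_flow} together with the normalization \eqref{eq:peak_meas_un_prob}. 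The adjoint $\A^\dagger$ acting on the dual pair $(v, \gamma) \in C^1([0,T]\times X \times \Theta) \times \R$ produces exactly the three families of inequality constraints \eqref{eq:peak_cont_un_init}--\eqref{eq:peak_cont_un_p}, so that \eqref{eq:peak_cont_un} is the dual program; I would verify this by reading off the coefficient of each measure in the Lagrangian $\scL$ displayed just before \eqref{eq:peak_cont_un}, confirming that stationarity in $\mu_0$ gives \eqref{eq:peak_cont_un_init}, stationarity in $\mu_p$ gives \eqref{eq:peak_cont_un_p}, and stationarity in each $\mu_k$ gives $\Lie_{f_k} v \le 0$ on $[0,T]\times X_k \times \Theta \times W$ (here using that $\inp{v(t,x,\theta)}{\pi^{tx\theta}_\# \Lie_{f_k}^\dagger \mu_k} = \inp{\Lie_{f_k} v(t,x,\theta)}{\mu_k}$, since $v$ does not depend on $w$, exactly as in the derivation of \eqref{eq:liou_disturb_weak}).

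The decisive hypothesis is compactness of $[0,T]\times X \times \Theta \times W$, which I would use in two ways. First, it makes all the relevant spaces of continuous functions separable Banach spaces and endows the measure cones with the weak-$*$ topology in which they are locally compact, so that the abstract duality theorem applies. Second, and more concretely, I would argue that the primal feasible set is weak-$*$ compact: the normalization \eqref{eq:peak_meas_un_prob} together with $\inp{1}{\mu_0} = \inp{1}{\mu_p}$ (take $v \equiv 1$ in Liouville) bounds the masses of $\mu_0$ and $\mu_p$ by $1$, and taking $v(t,x,\theta) = t$ in \eqref{eq:peak_meas_un_flow} bounds $\sum_k \inp{1}{\mu_k} = \inp{t}{\mu_p} \le T$, so every $\mu_k$ has mass at most $T$; a bounded weak-$*$ closed subset of measures on a compact set is weak-$*$ compact by Banach--Alaoglu. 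Since the objective $\mu_p \mapsto \inp{p(x)}{\mu_p}$ is weak-$*$ continuous, the primal supremum is attained and is finite, which is the standard sufficient condition (together with feasibility of the primal, guaranteed by Theorem \ref{thm:peak_meas}, and a Slater-type or closedness condition on the constraint image) for the absence of a duality gap.

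The step I expect to be the main obstacle — or at least the one requiring the most care — is verifying the closedness condition on the range of the constraint map $\A$ (equivalently, that there is no gap rather than merely weak duality). The clean route is to cite the structural result already invoked in the paper: the non-uncertain peak program \eqref{eq:peak_meas}--\eqref{eq:peak_cont} has zero gap on compact sets by Theorem C.20 of \cite{lasserre2009moments}, and Program \eqref{eq:peak_meas_un} has the identical conic structure after one merely enlarges the base spaces to include the extra coordinates $\theta$ and $w$ and replaces the single occupation measure by the finite sum $\sum_k \mu_k$ over switching modes. Since $\Theta$, $W$, and each $X_k$ are compact basic semialgebraic sets by assumption, the product $[0,T]\times X \times \Theta \times W$ is compact, so the same theorem applies verbatim; the presence of the pushforward $\pi^{tx\theta}_\#$ in \eqref{eq:peak_meas_un_flow} is harmless because a pushforward is a bounded weak-$*$ continuous linear operator between spaces of measures on compact sets. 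I would therefore present the proof as: (i) identify the primal-dual pair and its adjoint structure; (ii) note primal feasibility from Theorem \ref{thm:peak_meas} and finiteness/attainment of the primal optimum from the mass bounds and Banach--Alaoglu; (iii) invoke Theorem C.20 of \cite{lasserre2009moments} on the compact set $[0,T]\times X \times \Theta \times W$ to conclude $p^* = d^*$.
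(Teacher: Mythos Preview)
Your proposal is correct and follows essentially the same route as the paper: both verify the two hypotheses of Theorem C.20 of \cite{lasserre2009moments} by (i) bounding the masses of $\mu_0,\mu_p$ via $v\equiv 1$ and of $\sum_k\mu_k$ via $v=t$ in the Liouville equation, together with compactness of the supports, and (ii) asserting weak-$*$ closedness of the affine constraint map. Your version is more detailed in setting up the conic LP structure, verifying the adjoint, and invoking Banach--Alaoglu, but the paper's proof is simply a terser statement of the same argument.
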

\begin{proof}
Necessary and sufficient conditions for there to be no duality gap between measure and function programs are if all measures are bounded and if the affine map is closed in the weak-* topology (Theorem C.20 of \cite{lasserre2009moments}).
A measure is bounded if all of its finite-degree moments are bounded. Boundedness will hold if the mass of the measure is bounded and the support of the measure is compact. In \eqref{eq:peak_meas_un} $\mu_0$ and $\mu_p$ each have mass 1, and the mass of $\sum_k \mu_k \leq T$ by Liouville's equation. Compactness of $[0, T] \times X \times \Theta \times W$ therefore assures that all measures are bounded. The image of the affine map $(\mu_0, \mu_p, \mu_k) \rightarrow (\delta_0 \otimes \mu_0 + \sum_k \pi_\#^{tx\theta} \Lie_{fk}^\dagger \mu_k - \mu_p,  \mu_0)$  induced by constraints \eqref{eq:peak_meas_un_flow}-\eqref{eq:peak_meas_un_prob} is closed in the weak-* topology. Strong duality therefore holds by closure and boundedness of measures.
\end{proof}

The measure $\mu_0$ has $N_x + N_\theta$ variables, and $\mu_p$ has $1 + N_x + N_\theta$ variables. The $N_s$ occupation measures $\mu_k$ each have $1 + N_x +  N_\theta + N_w$ variables. If the switching structure was not taken into account by the methods of section  \ref{sec:switching}, there would be a single occupation measure $\mu$ with $1 + N_x + N_\theta + N_w +  N_s$ variables. The affine uncertainty structure breaks up the large $\mu$ (in terms of the number of variables) into $N_s$ smaller measures $(\mu_k)$.

\subsection{Continuous-Time LMI Relaxation}

\label{sec:lmi_cont}
The compact (Archimedean) basic semialgebraic sets in the uncertain peak estimation setting are
\begin{subequations}
\label{eq:peak_sets}
\begin{align}
    X &= \{x \mid g_i(x)\geq 0 & &\mid \ i = 1, \ldots, N_c\} \\
    X_0 &= \{x \mid  g_{0i}(x)\geq 0 & &\mid \ i = 1, \ldots, N_c^0\}\\
    X^k &= \{x \mid g_{ki}(x)\geq 0 & &\mid \ i = 1, \ldots, N_c^k\} \\
    \Theta &= \{\theta \mid g_{\theta i}(\theta)\geq 0 & &\mid \ i = 1, \ldots, N_c^\theta\}\\
    W &= \{w \mid g_{wi}(w)\geq 0 & &\mid \ i = 1, \ldots, N_c^w\}.
\end{align}
\end{subequations}

The degree of $g_i(x)$ is $d_i$, and other degrees $d_{0i}, \ d_{\theta i}, \ d_{wi}, \ d_{ki}$ are defined on corresponding polynomials. Monomials forming moments may be indexed as  $x^\alpha t^\beta \theta^\gamma w^\eta$ for multi-indices $\alpha \in \N^{N_x}, \ \beta \in \N, \ \gamma \in \N^{N_\theta}, \ \eta \in \N^{N_w}$. Define $y^0 = \{y^0_{\alpha \gamma} \}, \ y^p = \{y^p_{\alpha \beta \gamma}\}$ as the moment sequences for measures $\mu_0$ and $\mu_p$. The moment sequence for the occupation measure $\mu^k$ is $y^k = \{y^k_{\alpha \beta \gamma \eta}\}$ for each switching subsystem $k$. 
The Liouville equation \eqref{eq:peak_meas_un_flow} with test function $v(t, x, \theta) = x^\alpha t^\beta \theta^\gamma$ has the form,
\begin{align}
\label{eq:liou_lmi_unc}
    0&= \inp{x^\alpha t^\beta \theta^\gamma}{ {\delta_0 \otimes \mu_0}} - \inp{x^\alpha t^\beta \theta^\gamma}{\mu_p} \\
    &+ \textstyle \sum_k\inp{\Lie_{f_{k}(t, x, \theta, w)}( x^\alpha t^\beta \theta^\gamma)}{\mu_k}. \nonumber
\end{align}
Define the operator $\textrm{Liou}_{\alpha \beta\gamma }(y^0, y^p, y^k)$ as the linear relation between the moment sequences induced by \eqref{eq:liou_lmi_unc} assuming that each $f_k$ is a polynomial vector field. Given a degree $d$,  define the degrees $d_k'$ as $d + \ceil{\textrm{deg}(f_k)/2} - 1$ for each $k$.
The degree-$d$ LMI relaxation of the uncertain peak estimation problem in  \eqref{eq:peak_meas_un} resulting in an upper bound $p^*_d \geq P^*$ is, 
\begin{subequations}
\label{eq:peak_lmi_un}
\begin{align}
    p^*_d = & \textrm{max} \quad \textstyle\sum_{\alpha} p_\alpha y_{\alpha 0 0}^p \label{eq:peak_lmi_un_obj} \\
    & \quad \textrm{Liou}_{\alpha \beta \gamma}(y^0, y^p, y^k) = 0 \quad \textrm{by \eqref{eq:liou_lmi_unc}} & &\forall \abs{\alpha} + \abs{\beta} + \abs{\gamma} \leq 2d   \label{eq:peak_lmi_un_flow}\\
    & y^0_0 = 1 \\
    & \M_d(y^0), \M_d(y^p), \forall k: \M_{d'_k}(y^k) \succeq 0 \label{eq:peak_lmi_un_psd} \\
    &\M_{d-2}(t(T-t)y^p) \succeq 0 \label{eq:peak_lmi_un_time}\\
    &\forall k: \M_{d'_k-2}(t(T-t)y^k) \succeq  0 \label{eq:peak_lmi_un_time2}\\
    & \M_{d - d_{0i}}(g_{0i} y^0) \succeq 0  & & \forall i = 1, \ldots, N_c^0  \label{eq:peak_lmi_un_init}  \\
    & \M_{d - d_{\theta i}}(g_{\theta i} y^0), \ \M_{d - d_{\theta i}}(g_{\theta i} y^p) \succeq 0 & & \forall i = 1, \ldots, N_c^\theta\\ 
    &\forall k: \M_{d_k'- d_{\theta i}}(g_{\theta i} y^k) \succeq 0 & & \forall i = 1, \ldots, N_c^\theta \label{eq:peak_lmi_un_init_w} \\
    &\M_{d - d_i}(g_{i} y^p) \succeq 0 & & \forall i = 1, \ldots, N_c\\ 
    &\forall k:  \M_{d_k'- d_{ki}}(g_{ki} y^k) \succeq 0 & & \forall i = 1, \ldots, N_c^k\\ 
    &\forall k: \M_{d - d_{wi}}(g_{wi} y^k) \succeq 0 & & \forall i = 1, \ldots, N_c^w. \label{eq:peak_lmi_un_d}
\end{align}
\end{subequations}

Constraints \eqref{eq:peak_lmi_un_psd}- \eqref{eq:peak_lmi_un_d} are moment and localizing matrix PSD constraints ensuring that there exist representing measures to the moment sequences $(y^0, y^p, y^k)$ supported on the appropriate spaces. The sequence $\{p^*_d\}$ will converge to $p^*$ monotonically from above
as $d\rightarrow \infty$ if all sets in \eqref{eq:peak_sets} are Archimedean \cite{lasserre2009moments}.


\subsection{Continuous-Time Uncertain Examples}

Code is available at \url{github.com/jarmill/peak}, and is written in Matlab R2020a using
Gloptipoly3 \cite{henrion2003gloptipoly}, 
YALMIP \cite{Lofberg2004}, 
and Mosek 9.2 \cite{mosek92} 
to formulate and solve LMIs.
Demonstrations are available in the folder \texttt{peak/experiments\_uncertain} and are run here on an Intel i9 CPU at 2.30 GHz with 64.0 GB of RAM. 

Dynamics based on Example 1 of \cite{xue2019inner} (adding $w$) are,
\begin{equation}
\label{eq:ex_inner}
    \dot{x}(t) = \begin{bmatrix}-0.5x_1 - (0.5 + w(t)) x_2 + 0.5 \\ -0.5 x_2 + 1 + \theta. \end{bmatrix}
\end{equation}

Figure \ref{fig:inner} illustrates maximization of $p(x) = x_1$ starting in $X_0 = \{x \mid (x_1 + 1)^2 + (x_2 + 1)^2 \leq 0.25\}$ for time $t \in [0, 10]$. The admissible disturbances $w(t)$ are in $w = [-0.2, 0.2]$. Fig. \ref{fig:inner_d} has $\Theta = 0$ while Fig. \ref{fig:inner_d_w} has $\Theta = [-0.5, 0.5]$ for the time-independent uncertainty $\theta \in \Theta$. In each figure, the black circles are initial conditions from the boundary of $X_0$, the blue curves are sampled trajectories, and the red plane are level sets for upper bounds of $x_1$ along trajectories. At the order $r=4$ LMI relaxation, Fig. \ref{fig:inner_d} yields a bound of $P^* \leq 0.4925 $ while Fig. \ref{fig:inner_d_w} with $\theta$ results in $P^* \leq 0.7680$. The black surface containing all trajectories in Fig. \ref{fig:inner_d} is the level set $\{(t,x) \mid v(t, x) = 0.4925\}$. 

\begin{figure}[ht]
     \centering
     \begin{subfigure}[b]{0.48\linewidth}
         \centering
         \includegraphics[width=\linewidth]{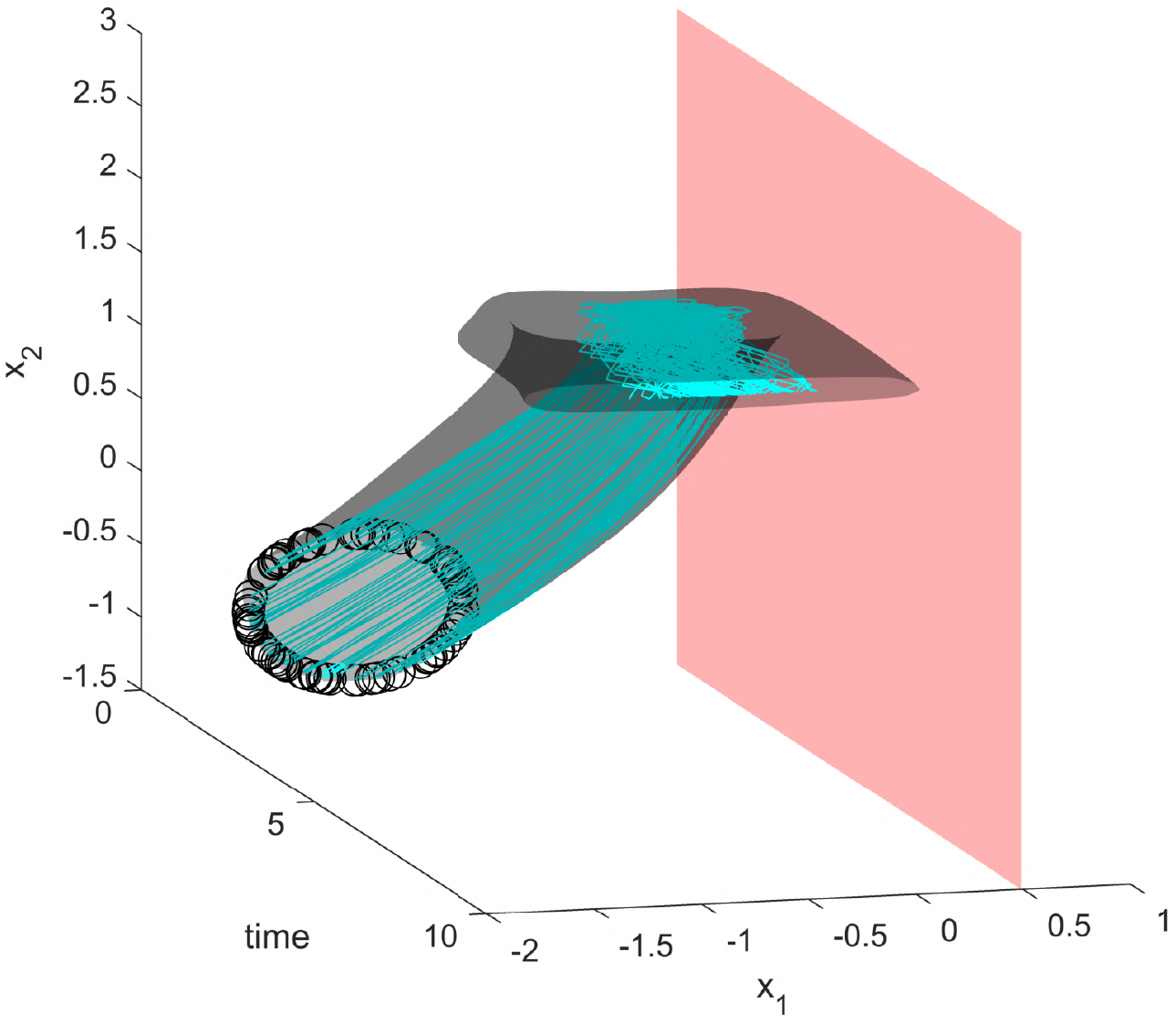}
         \caption{$\theta = 0$}
         \label{fig:inner_d}
     \end{subfigure}
     \;
     \begin{subfigure}[b]{0.48\linewidth}
         \centering
         \includegraphics[width=\linewidth]{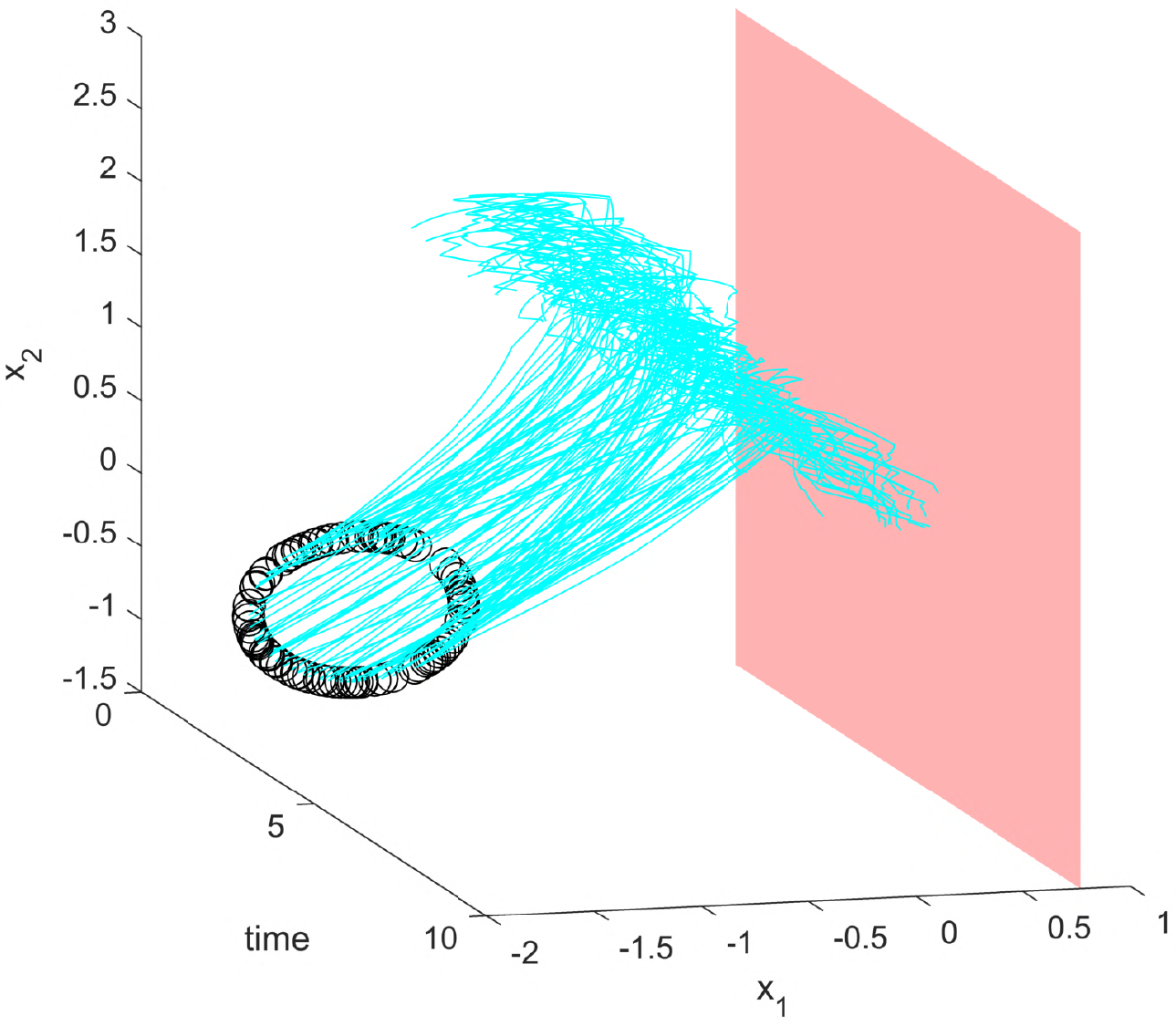}
         \caption{$\theta \in [-0.5, 0.5]$}
         \label{fig:inner_d_w}
     \end{subfigure}
      \caption{\label{fig:inner} Maximize $x_1$ at order 4 with $w(t) \in w$}
\end{figure}

The reduced three-wave model is a nonlinear model for the interaction of three quasisynchronous waves in a plasma \cite{goriely2001integrability}. These dynamics with parameters $(A, B, G)$ are,
\begin{align}
    \dot{x_1} &=  A x_1 + B x_2 + x_3 -2 x_2^2 \nonumber \\
    \dot{x_2} &= -B x_1 + A x_2 + 2 x_1 x_2 \label{eq:three_wave}\\
    \dot{x_3} &= -G x_3 - 2 x_1 x_2. \nonumber
\end{align}
This example aims to maximize $x_2$ on the three-wave system starting in $X_0 = \{x \mid (x_1+1)^2 + (x_2+1)^2 + (x_3+1)^2 \leq 0.16\}$. Order 3 LMI relaxations are used to upper bound $x_2$ over the region of interest $X = [-4, 3] \times [0.5, 3.6] \times [0, 4]$  and times $t \in [0, 5]$. The bound $P^* \leq 2.6108$ is produced with parameter values $A = 1, \; B = 0.5, \; G = 2$ (no uncertainty), as illustrated in Fig. \ref{fig:three_wave_std}. 
Fig. \ref{fig:three_wave_unc} adds uncertainty by letting $A \in [-0.5, 1.5]$ and $B \in [0.25, 0.75]$ vary arbitrarily with time, and $G$ now possesses parametric uncertainty in $[1.9, 2.1]$ . Uncertainty in $A, B$ are realized by switching between 4 subsystems of \eqref{eq:three_wave} with  $(A, B) \in \{0.5,1.5\} \times \{0.25, 0.75\}$
Uncertainty in $G$ is implemented as $G = 2 + \theta$ where $\theta \in [-0.1, 0.1]$. 
The order-3 bound under uncertainty in Fig. \ref{fig:three_wave_unc} is $P^* \leq 3.296$. 



\begin{figure}[ht]
     \centering
     \begin{subfigure}[b]{0.48\linewidth}
         \centering
         \includegraphics[width=\linewidth]{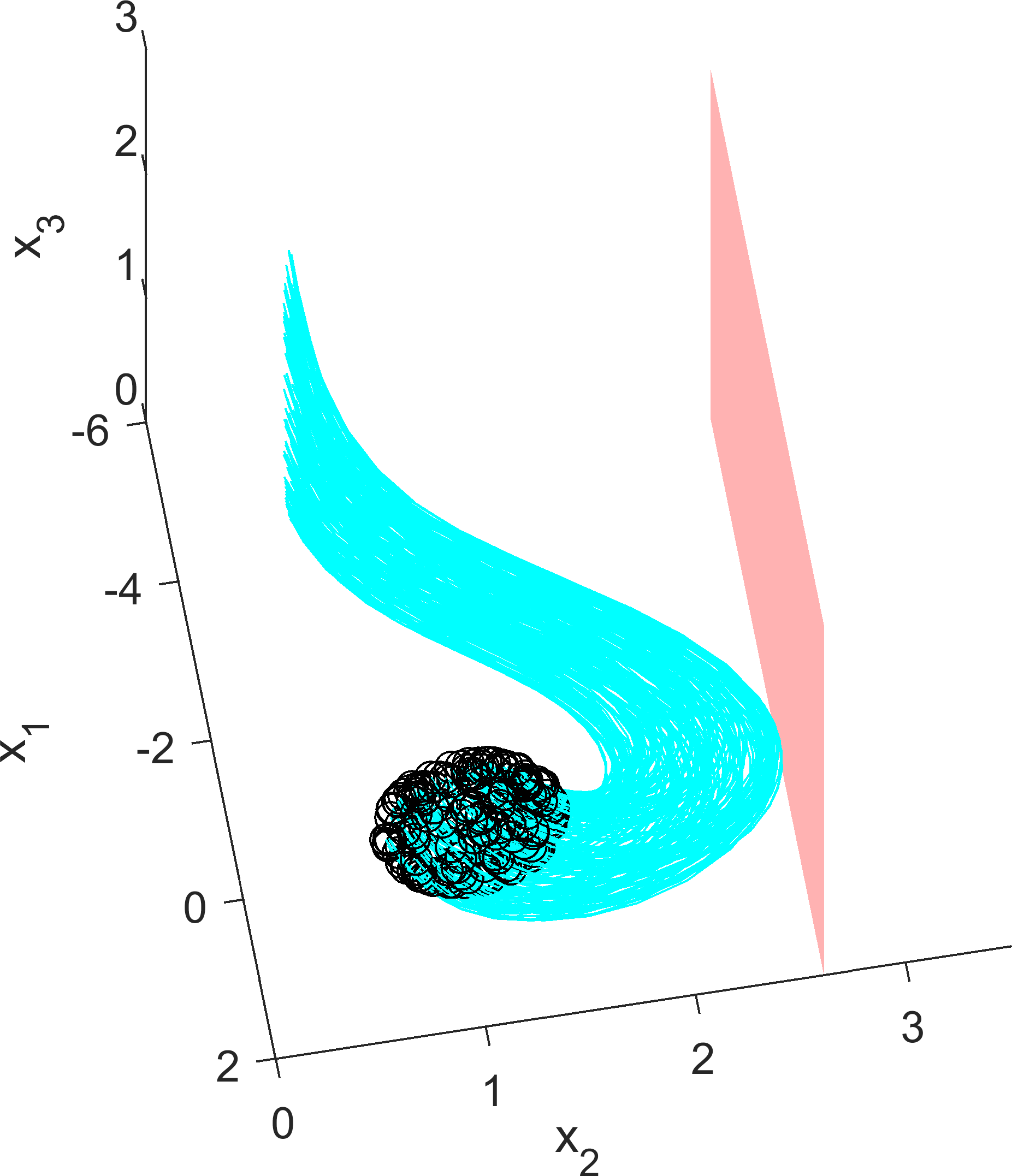}
         \caption{no uncertainty}
         \label{fig:three_wave_std}
     \end{subfigure}
     \;
     \begin{subfigure}[b]{0.48\linewidth}
         \centering
         \includegraphics[width=\linewidth]{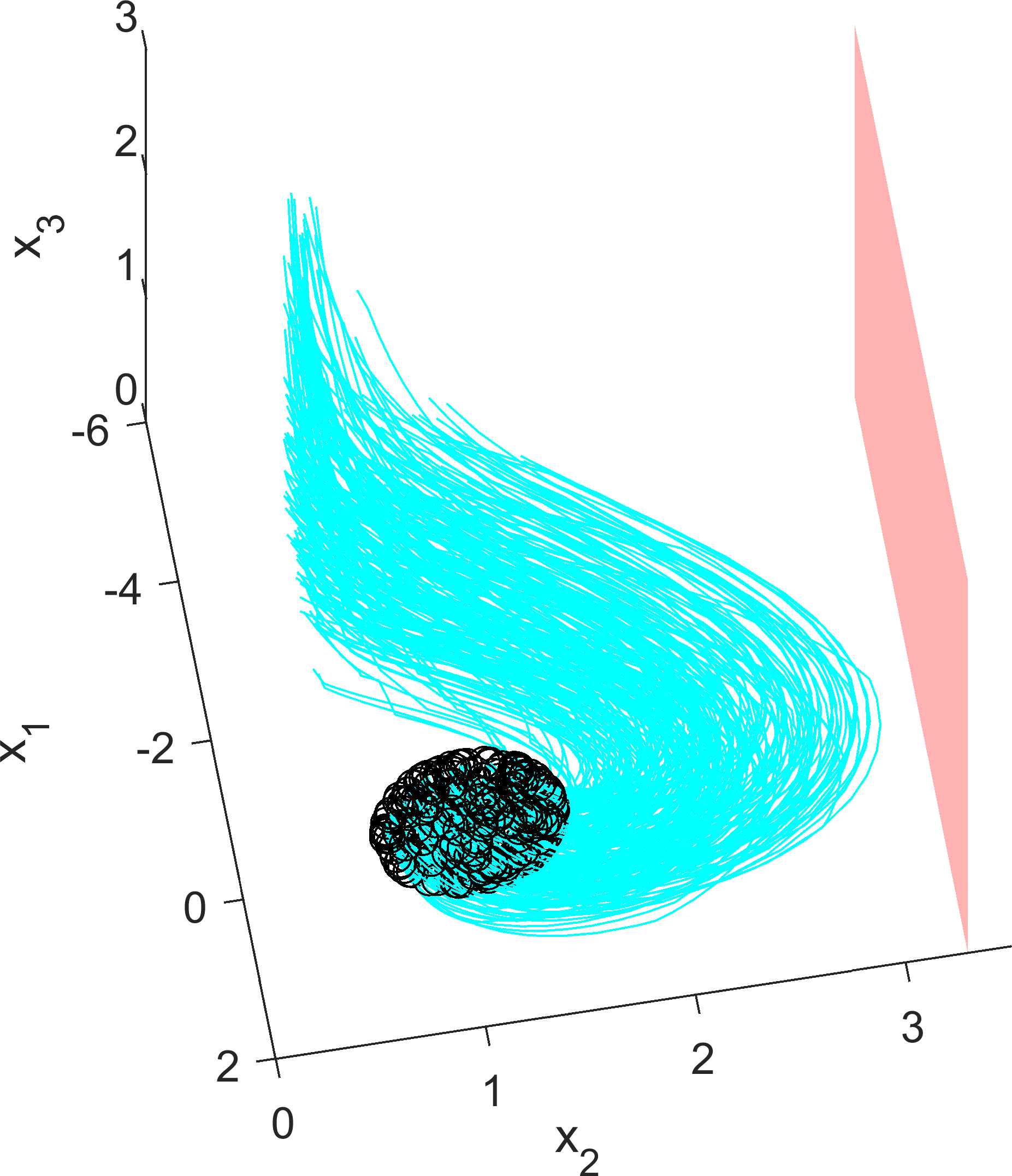}
         \caption{with uncertainty}
         \label{fig:three_wave_unc}
     \end{subfigure}
      \caption{\label{fig:three_wave} Maximize $x_2$ on three-wave system \eqref{eq:three_wave}
      }
\end{figure}





Section 4.1 of \cite{henrion2012measures} introduces a 1DOF attitude controller for validation of a space launcher system. These linearized dynamics corresponding to a double-integrator $I \ddot{\phi} = u$ and states $x = [\phi, \dot{\phi}]$. The input $u = \textrm{sat}_L(Kx)$ is a state feedback controller $Kx = 1000(2.475 \phi + 19.8 \dot{\phi})$ that saturates at levels $\pm L= \pm 380$. The subsystems are  linear operation $\abs{Kx} \leq L$, positive saturation $Kx \geq L$, and negative saturation $Kx \leq -L$ (deterministic switching).
These valid regions $X^k$ are separated in Fig. \ref{fig:space_all} by thin dotted diagonal lines. Maximizing $p(x) = \abs{\phi}$ (implemented as $\phi^2$) is shown in Fig. \ref{fig:space}. With $\abs{\phi_0} \leq 15^\circ$ and $\abs{\dot{\phi}_0} \leq 3^\circ/\textrm{sec}$, a degree-5 approximation finds a time-independent upper bound of $\abs{\phi_*} = 20.69^\circ$. The blue curve is the near-optimal trajectory, starting at the blue circle and extremizing $p(x)$ at the blue star.
The nominal moment of inertia in Fig. \ref{fig:space} is $I = 27,500 \ kg \;m^2$. Time-independent relative uncertainty $I$ may be introduced by replacing $I$ with $I/(1+\theta)$, where $\theta \in [-0.5, 0.5]$. The peak angle is raised to $\abs{\phi_*} = 51.86^\circ$ at $d=5$ at $I' = 2 I$ with this new uncertainty in Fig. \ref{fig:space_w}.

\begin{figure}[ht]
     \centering
     \begin{subfigure}[b]{0.48\linewidth}
         \centering
         \includegraphics[width=\linewidth]{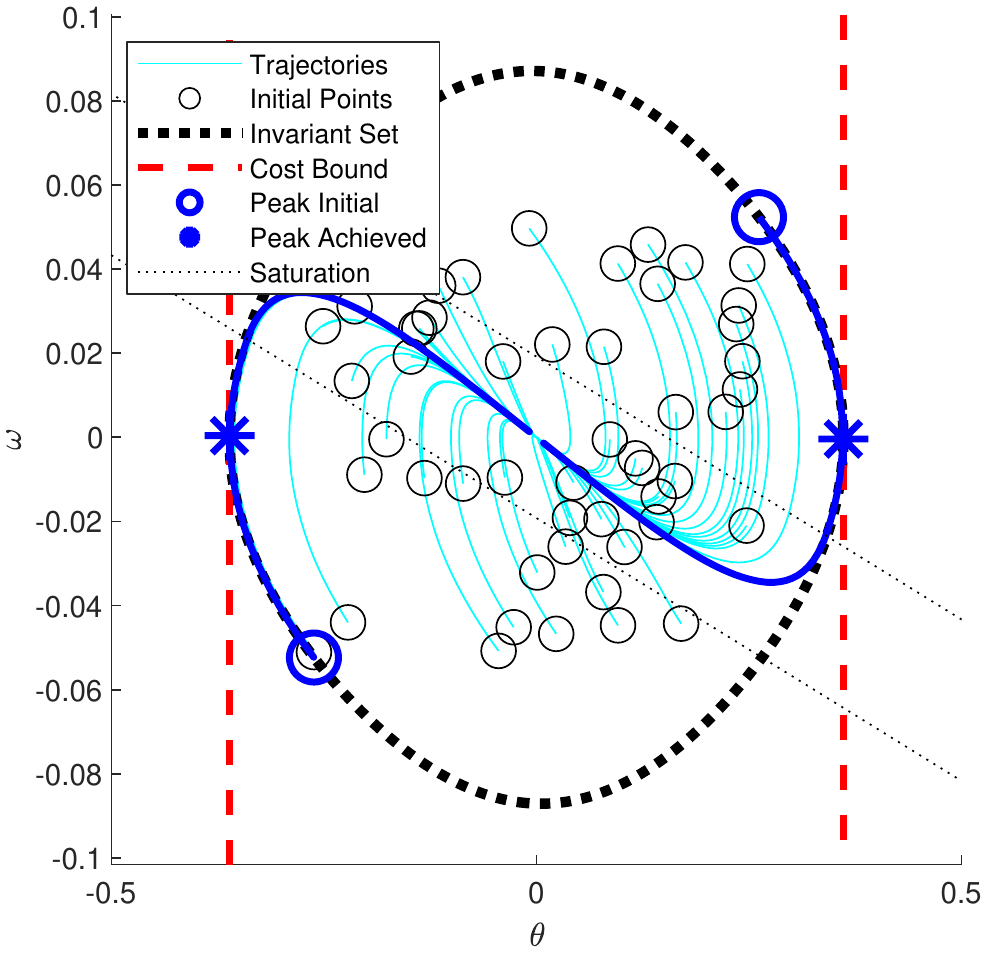}
         \caption{Certainty in $I$}
         \label{fig:space}
     \end{subfigure}
     \;
     \begin{subfigure}[b]{0.48\linewidth}
         \centering
         \includegraphics[width=\linewidth]{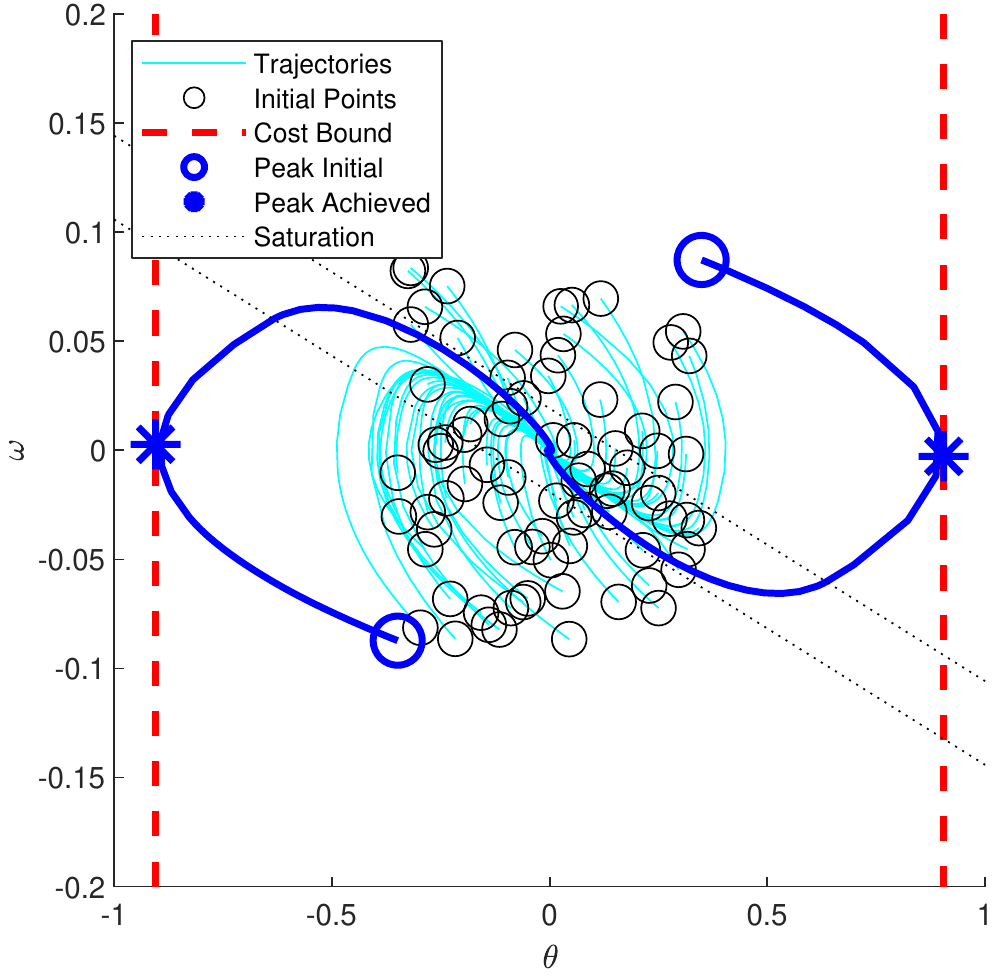}
         \caption{$\theta$ uncertainty in $I$}
         \label{fig:space_w}
     \end{subfigure}
      \caption{\label{fig:space_all} Maximum angle for 1DOF attitude controller \\}
\end{figure}
\section{Discrete-Time Uncertain Peak Estimation}
\label{sec:discrete}
Uncertain peak estimation can be extended to discrete systems, including switched discrete-time systems. 
A discrete system from times $t = 0, 1, \ldots, T$ is considered for  dynamics $x_+ = f(x)$ where $x_+$ is the next state. A trajectory starting at the initial condition $x_0 \in X_0$ is $x_t(x_0)$ The uncertain peak estimation problem for discrete systems with uncertainties $(\theta, w_t)$ and  $N_s$ subsystems with switching sequence $S_t$ is,
    \begin{align}
    P^* = & \max_{t, \, x_0 \in X_0,\,  \theta \in \Theta,\,  w_t, \ S_t} p(x_t(x_0, \theta, w_t, S_t)) &\label{eq:peak_traj_disc}\\
    & x_+ = f_k(x_t, \theta, w_t) \textrm{ if } S_t =  k \nonumber\\
    & w_t \in W, \ S_t \in 1, \ldots, N_s  \qquad \forall t \in 0, \ldots, T. \nonumber
    \end{align}
    
\subsection{Discrete-Time Measure Background}

Just as the Lie derivative $\Lie_f v$ yields the infinitesimal change in $v$ along continuous trajectories, the quantity $v(f(x)) - v(x)$ is the change in $v$ along a single discrete step. 
An occupation measure for sets $A \subseteq X$ with initial conditions distributed as $\mu_0 \in \Mp{X_0}$ may be defined for discrete systems,
\begin{equation}
    \label{eq:occ_discrete}
    \mu(A) = \int_{X_0} \sum_{t = 0}^T I_A(f^t (x_0)) d \mu_0.
\end{equation}
The quantity $\mu(A)$ is the averaged number of time steps that trajectories distributed as $\mu_0$ spend in the region $A$. For measures $\mu_0 \in \Mp{X_0}, \ \mu_p \in \Mp{X}, \ \mu \in \Mp{X}$, the strong and weak discrete Liouville equations for all $v$ are:
\begin{align}
    \inp{v(x)}{\mu_p} &= \inp{v(x)}{\mu_0} + \inp{v(f(x))}{\mu} - \inp{v(x)}{\mu}, \label{eq:liou_discrete_v} \\
    \mu_p &= \mu_0 + f_\# \mu - \mu . \label{eq:liou_discrete}
\end{align}
Time may be optionally included in system dynamics by setting a state $t_+ = t + 1$ and incorporating $t$ into dynamics. The pushforward term in \eqref{eq:liou_discrete} would then be $v(t+1, f(t,x)) - v(t,x)$.
Discrete systems with uncertainties $(\theta, w)$ have dynamics and Liouville equations according to,
\begin{align}
    x_{+} &= f(x_t, \theta, w_t),  & 
    \mu_p &= \mu_0 + \pi^{x \theta}_\#(f_\# \mu - \mu). \label{eq:liou_discrete_un}
\end{align}
The uncertainty $\theta \in \Theta$ is fixed, and the time-dependent uncertainty has $w_t \in W$ for every time step $t = 0, \ldots, T$. Switching uncertainty from Section \ref{sec:switching} with subsystems $f_k$ valid  over $X_k$ may be realized by defining occupation measures $\mu_k \in \Mp{X_k \times \Theta \times W}$ such that $\mu = \sum_k \mu_k$. 


\subsection{Discrete-Time Measure Program}
A measure program may be formulated to upper bound the peak-estimation task on discrete systems. The uncertainties available in this formulation are $(\theta, w)$ and switching between dynamics $f_k$ over $X_k$. 
The uncertain discrete peak estimation measure problem with variables $(\mu_0, 
\mu_k, \mu_p)$ is,
\begin{subequations}
\label{eq:peak_meas_un_disc}
\begin{align}
p^* = & \ \textrm{max} \quad \inp{p(x)}{\mu_p} \label{eq:peak_meas_un_disc_obj}&  \\
    & \mu_p = \mu_0 + \pi^{x \theta}_\#\left(\textstyle\sum_{k}( f_{k\#} \mu_{k} - \mu_k)\right) \label{eq:peak_meas_un_disc_flow}&    \\
    & \mu_0(X_0) = 1 & \label{eq:peak_meas_un_disc_prob}\\
    & T \geq \textstyle \sum_k \inp{1}{\mu_k} \label{eq:peak_meas_un_disc_time}\\
    & \mu_{k} \in \Mp{X_k \times \Theta \times W} & \forall k =1 , \ldots, N_s\\
    & \mu_p \in \Mp{X \times \Theta} & \\
    & \mu_0 \in \Mp{X_0 \times \Theta}.& \label{eq:peak_meas_un_disc_init}
\end{align}
\end{subequations}
\begin{rmk}
The composition of pushforwards in \eqref{eq:peak_meas_un_disc_flow} acts as $\inp{v(x, \theta)}{\pi^{x \theta}_\# f_{k \#} \mu_k} = \inp{v(f_k(x, \theta, w), \theta)}{\mu_k}$ for all test functions $v(x, \theta) \in C(X \times \Theta)$.
\end{rmk}
\begin{thm}
The optimum $p^*$ of \eqref{eq:peak_meas_un_disc} is an upper bound for $P^*$ from discrete program \eqref{eq:peak_traj_disc}.
\label{thm:peak_meas_disc}
\end{thm}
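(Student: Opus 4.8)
The plan is to mirror the proof of Theorem~\ref{thm:peak_meas} (the continuous-time case), exhibiting an explicit feasible point of \eqref{eq:peak_meas_un_disc} whose objective equals $P^*$. As before, begin with the single-subsystem case $N_s = 1$, $X^1 = X$. Fix an optimal tuple $(x_0^*, t^*, x_p^*, \theta^*, w^*_t)$ achieving the discrete peak $P^* = p(x_{t^*}(x_0^*, \theta^*, w^*_t))$, so that $x_p^* = x_{t^*}(x_0^*, \theta^*, w^*_t)$. Set $\mu_0 = \delta_{x = x_0^*} \otimes \delta_{\theta = \theta^*}$ and $\mu_p = \delta_{x = x_p^*}\otimes \delta_{\theta=\theta^*}$, so that \eqref{eq:peak_meas_un_disc_prob} holds trivially and the objective \eqref{eq:peak_meas_un_disc_obj} equals $p(x_p^*) = P^*$. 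Define the occupation measure $\mu \in \Mp{X \times \Theta \times W}$ by
\begin{equation}
    \inp{\tilde v}{\mu} = \textstyle\sum_{t=0}^{t^*-1} \tilde v\big(x_t(x_0^*, \theta^*, w^*_t),\, \theta^*,\, w^*_t\big)
\end{equation}
for all $\tilde v \in C(X \times \Theta \times W)$; note the sum runs to $t^*-1$ so that $\inp{1}{\mu} = t^* \leq T$, giving \eqref{eq:peak_meas_un_disc_time}.

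The heart of the argument is verifying the discrete Liouville constraint \eqref{eq:peak_meas_un_disc_flow}. Testing against an arbitrary $v(x,\theta) \in C(X \times \Theta)$ and using the pushforward identity noted in the Remark, the right-hand side becomes
\begin{equation}
    v(x_0^*, \theta^*) + \textstyle\sum_{t=0}^{t^*-1}\big[v(f(x_t(\cdot),\theta^*,w^*_t),\theta^*) - v(x_t(\cdot),\theta^*)\big],
\end{equation}
which telescopes: since $f(x_t(x_0^*,\theta^*,w^*_t),\theta^*,w^*_t) = x_{t+1}(x_0^*,\theta^*,w^*_t)$, the sum collapses to $v(x_{t^*}(\cdot),\theta^*) - v(x_0^*,\theta^*) = v(x_p^*,\theta^*) - v(x_0^*,\theta^*)$, and adding the leading $v(x_0^*,\theta^*)$ yields $v(x_p^*, \theta^*) = \inp{v}{\mu_p}$. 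Hence \eqref{eq:peak_meas_un_disc_flow} holds, all support constraints are met by construction ($x_0^* \in X_0$, $x_p^* \in X$, $\theta^* \in \Theta$, each $w^*_t \in W$, and each visited state lies in $X = X^1$), so the constructed tuple is feasible with objective $P^*$, proving $p^* \geq P^*$ when $N_s = 1$.

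For the switched case, reuse the same $\mu_0, \mu_p$, and given an optimal switching sequence $S^*_t$, split the occupation mass by residence mode: set
\begin{equation}
    \inp{\tilde v}{\mu_k} = \textstyle\sum_{t=0}^{t^*-1} I_{\{S^*_t = k\}}\, \tilde v\big(x_t(x_0^*,\theta^*,w^*_t,S^*_t),\, \theta^*,\, w^*_t\big).
\end{equation}
Then $\sum_k \inp{1}{\mu_k} = t^* \le T$ gives \eqref{eq:peak_meas_un_disc_time}, each $\mu_k$ is supported on $X_k \times \Theta \times W$ because the trajectory resides in $X^k$ exactly when $S^*_t = k$, and the telescoping computation goes through verbatim with $f$ replaced step-by-step by the active $f_k = f_{S^*_t}$: the per-step increments $v(f_{S^*_t}(x_t,\theta^*,w^*_t),\theta^*) - v(x_t,\theta^*)$ still sum to $v(x_p^*,\theta^*) - v(x_0^*,\theta^*)$. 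The main obstacle, as in Theorem~\ref{thm:peak_meas}, is purely bookkeeping: one must be careful that the state at step $t$ genuinely lies in the claimed region $X^k$ when $S^*_t = k$ (which is exactly the admissibility condition on the switching sequence in \eqref{eq:peak_traj_disc}) and that the index ranges in the telescoping sum line up so no boundary term is dropped; there is no analytic difficulty since everything is a finite sum and the measures are atomic. This establishes $p^* \ge P^*$ in full generality.
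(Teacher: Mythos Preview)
Your proof is correct and follows essentially the same approach as the paper: construct atomic $\mu_0,\mu_p$ from an optimal tuple and define the occupation measures as sums of Diracs along the optimal trajectory, split by switching mode, then observe feasibility. Your version is in fact slightly more careful than the paper's---you run the occupation sum over $t=0,\ldots,t^*-1$ so that the discrete Liouville telescoping closes exactly and $\sum_k\inp{1}{\mu_k}=t^*\le T$, and you explicitly include the $\theta$-component in $\mu_0$ to match its declared support $X_0\times\Theta$.
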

\begin{proof}
This proof follows the same steps as the proof to theorem \ref{thm:peak_meas}. An trajectory achieving a peak value of $P^*$ solving \eqref{eq:peak_traj_disc} may be expressed as a tuple $(t^*, x_0^*, x_p^*, \theta^*, w_t^*, S_t^*)$ with $P^* = p(x^*_p) =  p(x_{t^*}(x_0^*, \theta^*, w_t ))$. Measures may be defined from this tuple to solve problem \eqref{eq:peak_meas_un_disc}. The probability distributions are $\mu_0 = \delta_{x=x_0^*}$ and $\mu_p = \delta_{x =  x_p^* } \otimes \delta_{\theta = \theta^*}$. Switching  measures $\mu_k$ may be chosen as the unique occupation measures satisfying,
\begin{equation}
    \inp{\tilde{v}_k}{\mu_k} = \sum_{t=0}^{t^*}\tilde{v}(x_{t}(x_0^*, \theta^*, w_t^*), \theta^*, w_t^*) I(S_t = k), \label{eq:proof_occ_disc}
\end{equation}
for all test functions $\tilde{v}_k \in C(X_k \times \Theta \times W)$ and for each $k = 1, \ldots, N_s$. The measures $(\mu_0, \mu_p, \mu_k)$ are feasible solutions to \eqref{eq:peak_meas_un_disc_flow}-\eqref{eq:peak_meas_un_disc_init} with objective value  $P^* = p(x_p^*) = \inp{p(x)}{\mu_p}$, so $p^* \geq P^*$ is a valid upper bound to \eqref{eq:peak_traj_disc}.
\end{proof}
\begin{rmk}
Constraint \eqref{eq:peak_meas_un_disc_time} is a technique from \cite{magron2017discrete} ensuring that the maximal time in optimization is $T$ and that each $\mu_k$ has a bounded mass.
\end{rmk}

\subsection{Discrete-Time Function Program}
With dual variables $(v(x, \theta) \in C(X \times \Theta), \gamma \in \R)$ and a new dual variable $\alpha \geq 0$, the Lagrangian of \eqref{eq:peak_meas_un_disc} is,
\begin{align}
    \scL &= \inp{p(x)}{\mu_p} + \inp{v(x,\theta)}{\mu_0 - \mu_p}+ \alpha(T - \inp{1}{\textstyle \sum_k \mu_k}) \nonumber \\
    &+ \inp{v(x,\theta)}{\pi^{x\theta}_\# \textstyle\sum_{k} f_{k\#} \mu_k- \mu_{k}} + \gamma(1 - \inp{1}{\mu_0}).\nonumber
\end{align}
The corresponding dual problem is,
\begin{subequations}
\label{eq:peak_cont_un_disc}
\begin{align}
    d^* = & \ \min_{\gamma \in \R, \ \alpha \geq 0} \quad \gamma + T \alpha \label{eq:peak_cont_un_disc_obj}\\
    & \forall (x, \theta) \in X_0 \times \Theta: \nonumber \\
    & \quad {\gamma} \geq {v(x, \theta)}  &   \label{eq:peak_cont_un_disc_init}\\
    & \forall (x, \theta, w) \in  X_k \times \Theta \times W: \quad \forall k \nonumber \\
    & \quad v(f_k(x, \theta, w), \theta) -v(x, \theta)  \leq  \alpha \label{eq:peak_cont_disc_flow}\\
    & \forall (x, \theta) \in X \times \Theta: \nonumber \\
    & \quad v(x, \theta) \geq  p(x) \label{eq:peak_cont_disc_p}  \\
    &v(x, \theta) \in C(X \times \Theta) \label{eq:peak_cont_un_disc_v}.
\end{align}
\end{subequations}
\begin{thm}
Strong duality $p^*=d^*$ between holds between \eqref{eq:peak_meas_un_disc} and \eqref{eq:peak_cont_un_disc} if $T<\infty$ and $X \times \Theta \times W$ is compact. 
\label{thm:peak_strong_disc} 
\end{thm}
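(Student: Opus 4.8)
The plan is to mirror the proof of Theorem \ref{thm:peak_strong}, invoking the no-duality-gap criterion of Theorem~C.20 in \cite{lasserre2009moments}: for an infinite-dimensional LP in measures together with its continuous-function dual, strong duality holds provided (i) every measure feasible for the primal is bounded (finite mass on compact support, equivalently all finite-degree moments are finite), and (ii) the affine constraint map sending the tuple of measures to the tuple of right-hand sides is closed in the weak-$*$ topology. Weak duality $d^*\ge p^*$ is automatic from the Lagrangian $\scL$ that produced \eqref{eq:peak_cont_un_disc}, so the whole content of the statement is the reverse inequality supplied by Theorem~C.20.

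First I would verify boundedness. Constraint \eqref{eq:peak_meas_un_disc_prob} forces $\mu_0$ to be a probability measure, and pairing the flow equation \eqref{eq:peak_meas_un_disc_flow} against the test function $v\equiv 1$ makes the pushforward and marginalization terms cancel (pushforward preserves mass), so $\inp{1}{\mu_p}=\inp{1}{\mu_0}=1$ and $\mu_p$ is a probability measure as well. Constraint \eqref{eq:peak_meas_un_disc_time}, the device from \cite{magron2017discrete}, bounds $\textstyle\sum_k\inp{1}{\mu_k}\le T$, so with $T<\infty$ each $\mu_k$ has mass at most $T$. All of these measures are supported on subsets of the compact set $X\times\Theta\times W$ (with $X_0\subseteq X$, $X_k\subseteq X$), hence every monomial is bounded on each support and all moments of all feasible measures are finite and uniformly bounded. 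This is precisely the place where the hypotheses $T<\infty$ and compactness of $X\times\Theta\times W$ are used.

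Next I would check weak-$*$ closedness of the affine map
\[
(\mu_0,(\mu_k)_k,\mu_p)\ \longmapsto\ \Bigl(\mu_0+\pi^{x\theta}_\#\textstyle\sum_k(f_{k\#}\mu_k-\mu_k)-\mu_p,\ \mu_0(X_0),\ \textstyle\sum_k\inp{1}{\mu_k}\Bigr)
\]
induced by \eqref{eq:peak_meas_un_disc_flow}--\eqref{eq:peak_meas_un_disc_time}. Each constituent operation---the pushforward $f_{k\#}$ by the continuous map $f_k$, the marginalization $\pi^{x\theta}_\#$, the total-mass functional, and addition of measures---is weak-$*$ continuous on bounded families of nonnegative measures over the relevant compact domains; unlike the continuous-time case there is no unbounded Liouville operator, only bounded pushforward and projection operators. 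By Banach--Alaoglu the feasible set of tuples, being bounded by the previous paragraph and cut out by weak-$*$ closed conditions, is weak-$*$ compact, so its image under this continuous map is closed. With (i) and (ii) established, Theorem~C.20 of \cite{lasserre2009moments} gives $p^*=d^*$.

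The main obstacle is the bookkeeping in step (ii): one must confirm that the composition $\pi^{x\theta}_\# f_{k\#}$ is weak-$*$ continuous when restricted to the bounded slice $\{\mu_k\in\Mp{X_k\times\Theta\times W}:\inp{1}{\mu_k}\le T\}$, and that the mass \emph{inequality} \eqref{eq:peak_meas_un_disc_time} (handled in the dual by the sign-constrained multiplier $\alpha\ge 0$) does not spoil closedness of the image. Both are routine once the compact-support, bounded-mass structure is in place, but they are the technical heart of the argument.
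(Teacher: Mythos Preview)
Your proposal is correct and follows essentially the same route as the paper: invoke Theorem~C.20 of \cite{lasserre2009moments} after checking that all feasible measures have bounded mass on compact support (using \eqref{eq:peak_meas_un_disc_prob}, the test function $v\equiv 1$ in \eqref{eq:peak_meas_un_disc_flow}, and the time bound \eqref{eq:peak_meas_un_disc_time}) and that the affine constraint map is weak-$*$ closed. In fact you supply considerably more detail than the paper's own three-sentence proof, including the explicit observation that the discrete pushforward operators are weak-$*$ continuous (unlike the unbounded $\Lie_f^\dagger$ in the continuous case) and a careful remark on how the inequality constraint \eqref{eq:peak_meas_un_disc_time} interacts with closedness via the sign-constrained multiplier $\alpha\ge 0$.
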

\begin{proof}
This is affirmed by a similar process to Theorem \ref{thm:peak_strong}. All measures have bounded finite moments given that their masses are bounded and their supports are compact. The image of the affine map in constraints \eqref{eq:peak_meas_un_disc_flow}-\eqref{eq:peak_meas_un_disc_prob} is closed in the weak-* topology,  concluding the conditions for strong duality by Theorem C.20 of \cite{lasserre2009moments}.
\end{proof}  

%

\subsection{Discrete LMI}

The LMI relaxation of \eqref{eq:peak_meas_un_disc} can be developed in the same manner as in Section \ref{sec:lmi_cont}. The sets $(X_0, X, W, D)$ are defined in the same way as in equation \eqref{eq:peak_sets}. As there is no $t$ term in discrete systems, monomials forming moments are indexed as $x^\alpha \theta^\gamma w^\eta$. The moment sequences are $y^0$, $y^p$, and a $y^k$ for each switching subsystem $k = 1, \ldots, N_s$. 
The Liouville equation \eqref{eq:peak_meas_un_disc_flow} with a given test function $v(x, \theta) = x^\alpha \theta^\gamma$  is ,
 \begin{align}
 \label{eq:liou_disc_lmi_unc}
    0&= \inp{x^\alpha \theta^\gamma}{ \delta_0 \otimes \mu_0} - \inp{x^\alpha  \theta^\gamma}{\mu_p} \\
    &+ \textstyle \sum_k\inp{( f_{k}(x, \theta, w)^\alpha \theta^\gamma - x^\alpha \theta^\gamma}{\mu_k}. \nonumber
\end{align}

The operator $\textrm{Liou}_{\alpha \gamma }(y^0, y^p, y^k)$  is defined as the relation induced by the discrete Liouville equation \eqref{eq:liou_lmi_unc_disc}. The discrete degree-$d$ LMI truncation of \eqref{eq:peak_meas_un_disc} is,
\begin{subequations}
\label{eq:peak_lmi_un_disc}
\begin{align}
    p^*_d = & \textrm{max} \quad \textstyle\sum_{\alpha} p_\alpha y_{\alpha 0}^p \label{eq:peak_lmi_un_disc_obj} \\
    & \textrm{Liou}_{\alpha \gamma}(y^0, y^p, y^k) = 0 \qquad \textrm{by \eqref{eq:liou_disc_lmi_unc}} & &  \forall \abs{\alpha} + \abs{\gamma} \leq 2d \label{eq:peak_lmi_un_disc_flow}\\
    & y^0_0 = 1 \\
    & \textstyle\sum_{k} y^k_0 \leq T \label{eq:peak_lmi_un_time_limit} \\
    & \M_d(y^0), \M_d(y^p), \forall k: \M_d(y^k) \succeq 0 \label{eq:peak_lmi_un_disc_psd} \\
    &\M_{d - d_{0i}}(g_{0i} y^0) \succeq 0  \label{eq:peak_lmi_un_disc_init}  & &\forall i = 1, \ldots, N_c^0\\ 
    &\M_{d - d_{\theta i}}(g_{\theta i} y^0) \succeq 0  \label{eq:peak_lmi_un_disc_init_w}  & &\forall i = 1, \ldots, N_c^{\theta}\\ 
    &\M_{d - d_i}(g_{i} y^p), \ \forall k: \M_{d - d_i}(g_{i} y^k) \succeq 0 & & \forall i = 1, \ldots, N_c \label{eq:peak_lmi_un_disc_peak} \\
    &\M_{d - d_{\theta i}}(g_{\theta i} y^p), \ \forall k: \M_{d - d_{\theta i}}(g_{\theta i} y^k) \succeq 0 & & \forall i = 1, \ldots, N_c^{\theta }\\
    &\forall k: \M_{d - d_{wi}}(g_{wi} y^k) \succeq 0 & & \forall i = 1, \ldots, N_c^w. \label{eq:peak_lmi_un_disc_d}
\end{align}
\end{subequations}

Constraint \eqref{eq:peak_lmi_un_time_limit} enforces the time limit constraint on occupation measures \eqref{eq:peak_meas_un_disc_time}. The structure of \eqref{eq:peak_lmi_un_disc} is similar to \eqref{eq:peak_lmi_un} with the affine, moment matrix and localizing matrix constraints.


\subsection{Discrete Example}

An example to demonstrate uncertain discrete peak estimation is to minimize $x_2$ on the following subsystems,
\begin{subequations}
\label{eq:poly_system}
\begin{align}
    f_1(x,w) &= \begin{bmatrix}-0.3x_1 + 0.8 x_2 + 0.1 x_1 x_2 \\ -0.75 x_1 - 0.3 x_2 +w \end{bmatrix}  \\
    f_2(x, w) &= \begin{bmatrix}0.8x_1 + 0.5 x_2 - 0.01 x_1^2 \\ -0.5 x_1 + 0.8 x_2 -0.01x_1 x_2 + w\end{bmatrix}.
\end{align}
\end{subequations}
The space under consideration is $X = [-3, 3]^2$, and the time varying uncertainty $w_t$ satisfies $w_t \in [-0.2, 0.2] = \Delta$. The valid regions for subsystems of \eqref{eq:poly_system} are $X_1 = X$ and $X_2 = X \cap (x_1 \geq 0)$. When $x_1 \geq 0$ the system may switch arbitrarily between dynamics $f_1$ and $f_2$, but when $x_2 < 0$, the system only follows dynamics $f_1$. Figure \ref{fig:poly} visualizes minimizing $x_2$ starting from the initial set $X_0 = \{x \mid (x_1 + 1.5)^2 + x_2^2 = 0.16\}$ between discrete times $t \in 0, \ldots, T$ with $T = 50$. A fourth order LMI relaxation of \eqref{eq:peak_meas_un_disc_obj} is solved aiming to maximize $p(x) = -x_2$. With $w=0$ in Fig. \ref{fig:poly_std} the bound is $P^* \leq 1.215$ ($\min x_2 \geq -1.215$), while the time varying $w$ in Fig. \ref{fig:poly_d} yields a bound of $P^* \leq 1.837$. 
\begin{figure}[ht]
     \centering
     \begin{subfigure}[b]{0.48\linewidth}
         \centering
         \includegraphics[width=\linewidth]{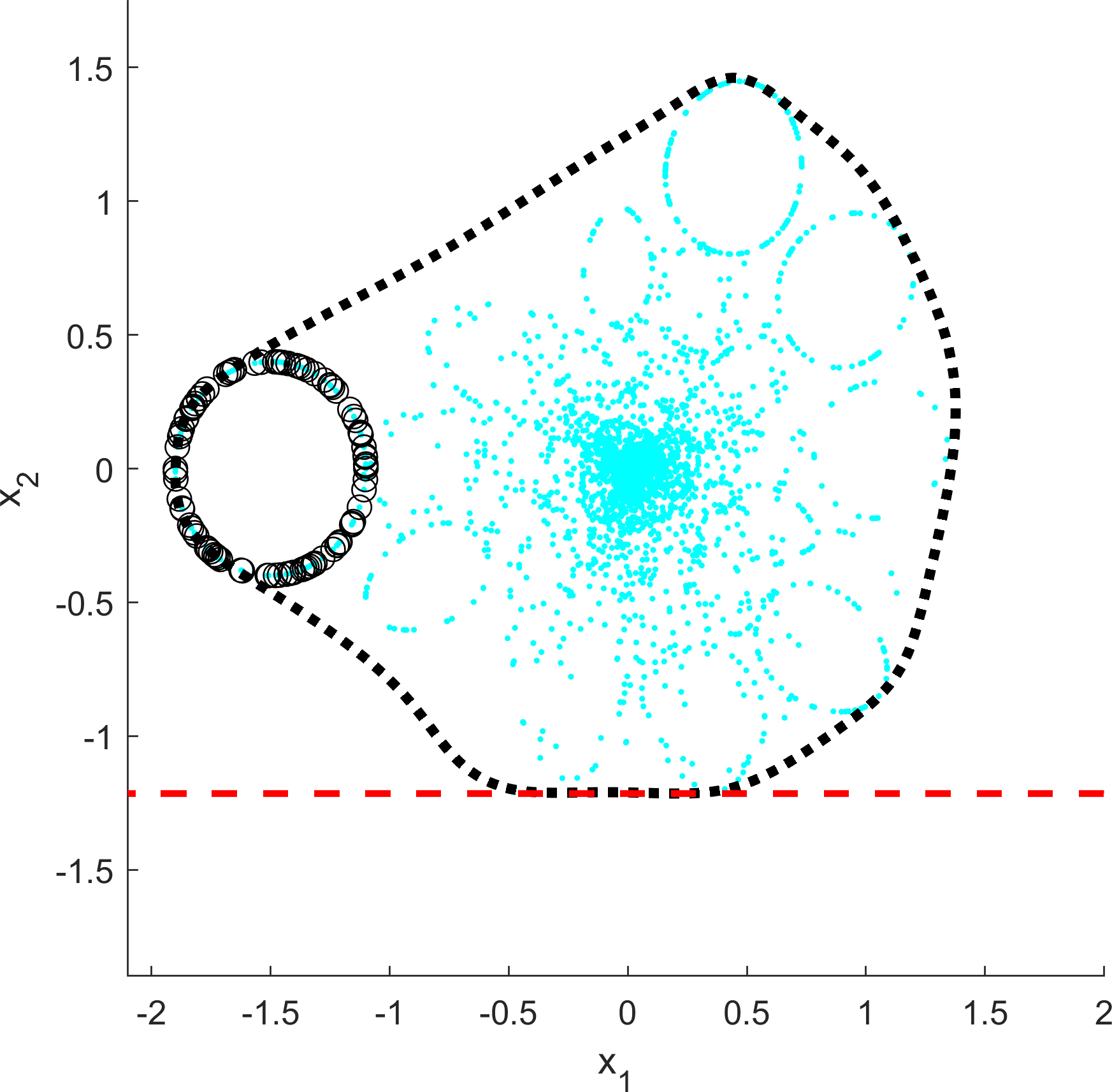}
         \caption{$w_t = 0$ }
         \label{fig:poly_std}
     \end{subfigure}
     \;
     \begin{subfigure}[b]{0.48\linewidth}
         \centering
         \includegraphics[width=\linewidth]{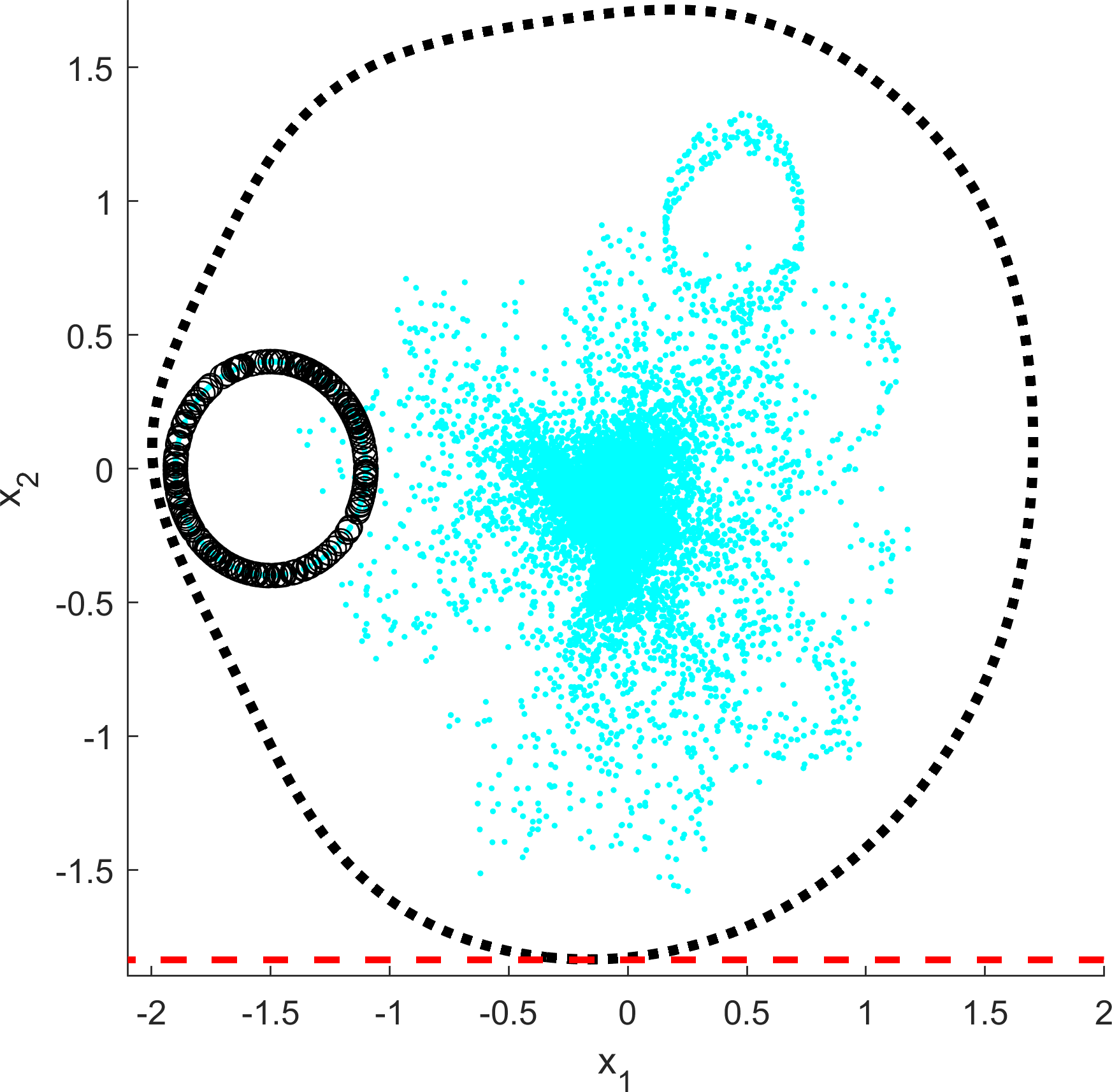}
         \caption{$w_t \in [-0.2, 0.2]$}
         \label{fig:poly_d}
     \end{subfigure}
      \caption{\label{fig:poly} Minimize $x_2$ on system  \eqref{eq:poly_system}}
\end{figure}

\section{Safety Analysis}
\label{sec:safety}

The work in \cite{miller2020recovery} introduced the concept of `safety margins' that are solvable through peak estimation to certify safety of trajectories. Assume that $X_u = \{x \mid p_i(x) \geq 0, \ i = 1, \ldots, N_u\}$ is a basic semialgebraic set with $N_u$ constraints defining an unsafe set. $X_u$ may be equivalently redefined as $X_u = \{x \mid \min_i p_i(x) \geq 0\}$. If the maximum value of $\min_i p_i(x)$ is negative for all points on trajectories starting from $x_0 \in X_0$ in times $t \in [0, T]$, then all trajectories are certifiably safe. The quantity of a `safety margin' is an upper bound for $\min_i p_i(x) \geq 0$ which may be found through LMI approximations. Finding the safety margin is an instance of maximin optimization, aiming to maximize the minimum of a set of functions.

A maximin optimization problem may be considered by replacing objectives \eqref{eq:peak_meas_un_obj} or \eqref{eq:peak_meas_un_disc_obj} with,
\begin{subequations}
\begin{align}
    & \textrm{max}_{q \in \R} \quad q  \label{eq:maximin_obj} \\
    & q \leq \inp{p_i}{\mu_p} & i = 1, \ldots, N_u.
    \end{align}
\end{subequations}

The dual formulation introduces variables $\beta \in \R_+^{N_u}$ as nonnegative multipliers. Constraints \eqref{eq:peak_cont_un_p} and \eqref{eq:peak_cont_disc_flow} are then replaced by, 
\begin{equation}
    v \geq \beta^T p(x) = \sum_{i=1}^{N_u} \beta_i p_i(x).
\end{equation}
over the valid region ($[0, T] \times X \times \Theta$ or $X \times \Theta$).

The value $q$ is a lower bound for all of the expectations $\inp{p_i}{\mu_p}$. A negative optimal value of $q$ for any degree of an LMI relaxation is sufficient to certify safety.

An example of a successful safety margin under uncertainty is depicted in Figure \ref{fig:flow_half}. The system under consideration is based on Example 1 of \cite{prajna2004safety}, with dynamics for time-varying $w$,
\begin{equation}
    \dot{x} = f(x, w) = \begin{bmatrix}x_2 \\ -x_1 + \frac{w}{3} x_1^3 - x_2 \end{bmatrix}.
\end{equation}

For trajectories originating in $X_0 = \{x \mid (x_1 - 1.5)^2 + x_2^2 \leq 0.4^2\}$, it is desired to determine if any trajectory reaches the half-circle unsafe set in red $X_u = \{x \mid p_1(x) = x_1^2 - (x_2 + 0.5)^2 \leq 0.25, \ p_2(x) = \frac{\sqrt{2}}{2} (x_1 + x_2 + 0.5) \geq 0$. When $w = 1$ is constant, the 5th-order LMI (relaxation of maximin peak estimation with infinite time) computes a safety margin of $p^*_5 = -0.1417 < 0$ certifying safety of all trajectories. This value is nearly optimal, and the trajectory starting at the blue circle in \ref{fig:flow_half_std} approximately maximizes $\min_i p_i(x)$ as recovered by Alg. 1 of \cite{miller2020recovery}. The black contour is the auxiliary function level set $\{x \mid v(x) = p^*_5\}$, and the red contour is the level set of $\{x \mid \min_i p_i(x) = p^*_5\}$.

The time-varying case where $w \in [0.5, 1.5]$ is shown in Figure \ref{fig:flow_half_b}. The safety margin of $p^*_5 = -0.0784 < 0 $ is computed at the 5th-order LMI relaxation.

\begin{figure}[ht]
     \centering
     \begin{subfigure}[b]{0.48\linewidth}
         \centering
         \includegraphics[width=\linewidth]{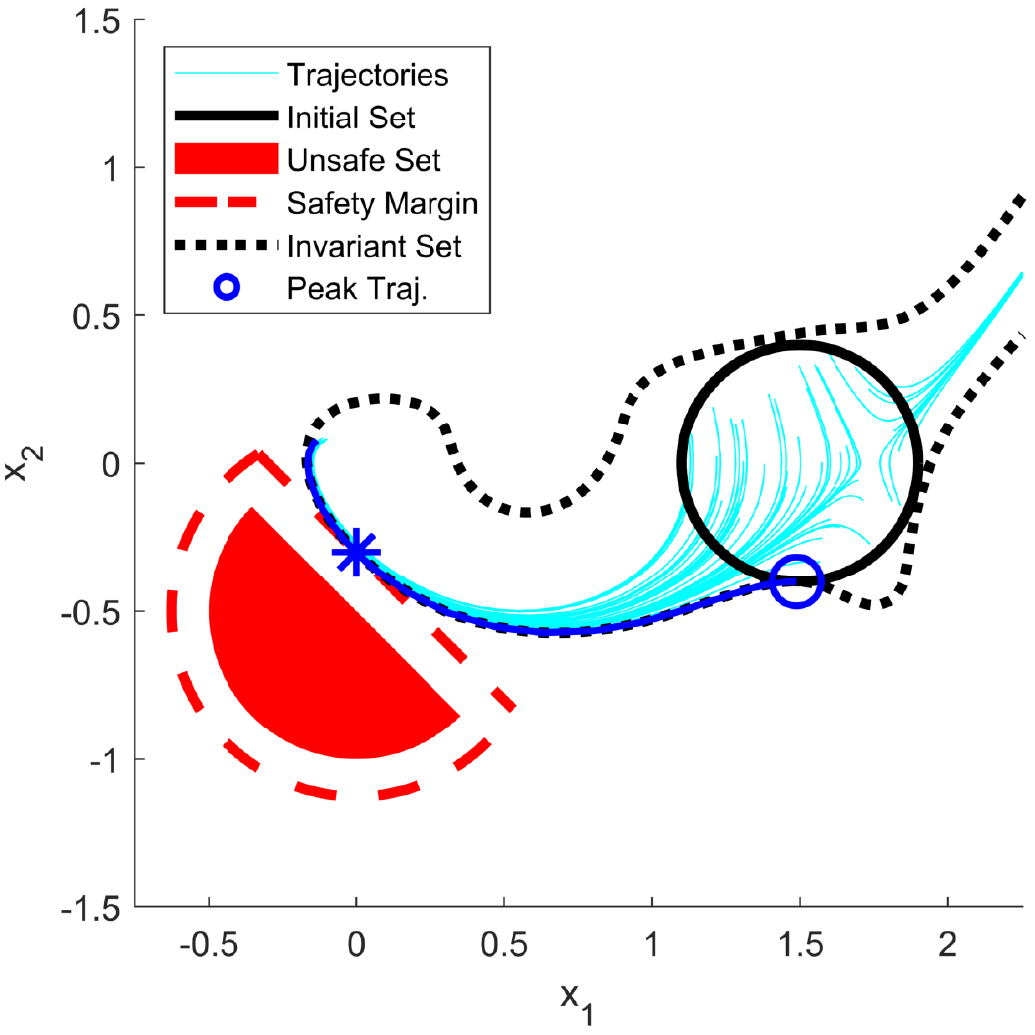}
         \caption{$w = 1, \ p_5^* = -0.1417$ }
         \label{fig:flow_half_std}
     \end{subfigure}
     \;
     \begin{subfigure}[b]{0.48\linewidth}
         \centering
         \includegraphics[width=\linewidth]{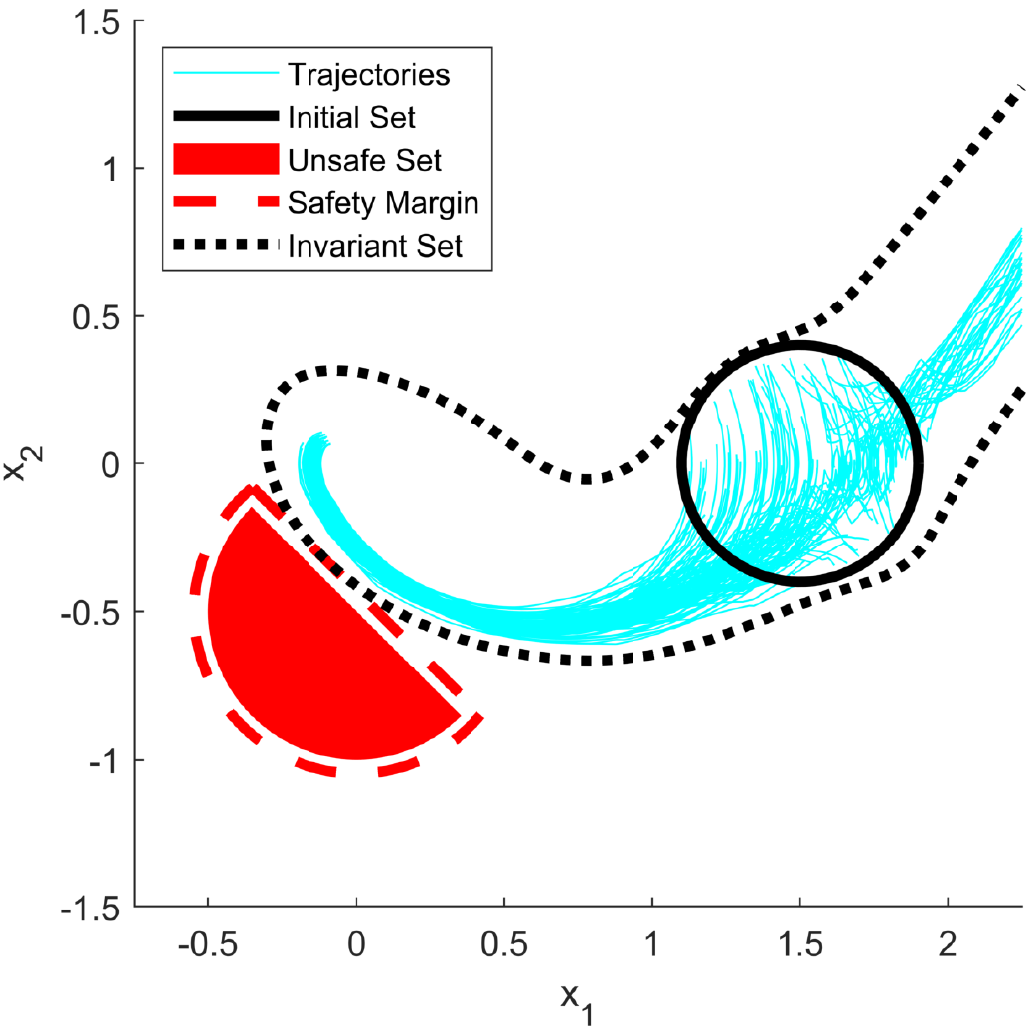}
         \caption{$w \in [-0.5, 1.5], \ p_6^* = -0.0487$}
         \label{fig:flow_half_b}
     \end{subfigure}
      \caption{\label{fig:flow_half} Safety margins on half-circle set}
\end{figure}


\section{Conclusion}

\label{sec:conclusion}


The problem of peak estimation with uncertainty may be bounded by the optimal value of an infinite-dimensional LP in occupation measures. This LP is then approximated by the moment-SOS hierarchy and Linear Matrix Inequalities. Time-independent and time-dependent  uncertainties  are incorporated into this measure framework for continuous and discrete systems. Future work includes uncertain peak estimation for safety verification and hybrid systems, and also exploiting specialized uncertainty structures.

\bibliographystyle{IEEEtran}
\bibliography{peak_uncertain_reference.bib}
\end{document}